\documentclass{article}
\usepackage{latexsym}
\usepackage{amssymb}
\usepackage{amsmath}
\usepackage{color}
\usepackage{graphicx}
\usepackage{amsthm}
\usepackage{indentfirst}
\usepackage{mathrsfs}
\allowdisplaybreaks


\newtheorem{theorem}{\color{black}\indent Theorem}[section]
\newtheorem{lemma}{\color{black}\indent Lemma}[section]

\newtheorem{definition}{\color{black}\indent Definition}[section]
\newtheorem{remark}{\color{black}\indent Remark}[section]
\newtheorem{corollary}{\color{black}\indent Corollary}[section]

\textheight210mm
\textwidth145mm
\hoffset-1.2cm
\voffset-1cm
\renewcommand{\baselinestretch}{1.2}

\DeclareMathOperator{\diag}{{diag}}
\DeclareMathOperator{\ind}{{ind}}
\DeclareMathOperator{\dist}{{dist}}
\DeclareMathOperator{\fix}{{Fix}}
\begin{document}
\large
\title{Rotating Quasi-periodic Solutions of Second Order Hamiltonian Systems with Sub-quadratic Potential}
\author{{Jiamin Xing$^{a}$ \thanks{ E-mail address : xingjiamin1028@126.com.} ,~ Xue Yang$^{a,b}$ \thanks{ E-mail address : yangxuemath@163.com.} ,~ Yong Li$^{b,a}$} \thanks{ E-mail address : liyongmath@163.com}~\thanks{Corresponding author}\\
{$^{a}$School of Mathematics and Statistics, and}\\
{Center for Mathematics and Interdisciplinary Sciences,}\\
{Northeast Normal University, Changchun 130024, P. R. China.}\\
 {$^{b}$College of Mathematics, Jilin University,}\\
 {Changchun, 130012, P. R. China.}\\
 }
\date{}
\maketitle
\par
{\bf Abstract.}
\par This paper concerns the existence of multiple rotating quasi-periodic solutions for second order Hamiltonian systems with sub-quadratic potential. Such solutions have the form $x(t+T)=Qx(t)$ for some orthogonal matrix $Q$. To deal with such quasi-periodic solutions, we introduce the $\mathcal{Q}(s)$ index which is a development of the well known
$S^1$ index. Applying the $\mathcal{Q}(s)$ index, we give an estimate of the number for rotating quasi-periodic orbits with a fixed period.
\vskip 5mm  {\bf Key Words:} Multiple rotating quasi-periodic solutions; Second order Hamiltonian systems; Index theory.
\par
{\bf Mathematics Subject Classification(2010):} 70H05; 70H12.
\section{Introduction and main results}

Consider the second order Hamiltonian system
\begin{eqnarray}\label{1.1}
x''+\nabla V(x)=0,
\end{eqnarray}
where $V\in C^1(\mathbf{R}^{n},\mathbf{R})$. The existence theory of periodic solutions for system \eqref{1.1} has been well developed, for example, see \cite{Benci,Long,Ra,Ra1,Tang,Z-L,ZhangS} and the references therein. For quasi-periodic solutions, it is generally very difficult due to the small divisor. The celebrated KAM theory answers that most quasi-periodic solutions are persistent under small perturbations, see \cite{Kolmogorov,Arnold,Moser1} for non-resonant case and \cite{Li} for resonant case.
In the present paper, we study the existence of rotating quasi-periodic solutions with the form $x(t+T)=Qx(t)$ for some orthogonal matrix $Q$.
It is a symmetric periodic or quasi-periodic one. Recently, such solutions have been studied by many works.
Hu and Wang \cite{Hu1} established the theory of
conditional Fredholm determinant to study rotating quasi-periodic orbits in Hamiltonian systems. Hu et al \cite{Hu2} gave some stability criteria for the symmetric periodic orbits and used them to study the linear stability of elliptic Lagrangian solutions of the classical planar three-body problem. Chang and Li \cite{Chang} considered rotating quasi-periodic solutions of second order dissipative dynamical systems. Liu et al \cite{LiuG}
studied multiple  rotating quasi-periodic solutions of asymptotically linear Hamiltonian systems. Liu \cite{LiuH} and Zhang \cite{Zhang} obtained symmetric periodic orbits of Hamiltonian systems on a given convex energy surface respectively.

By the structure of \eqref{1.1}, one should consider rotating quasi-periodic solutions if $V$ is $Q$ invariant for some orthogonal matrix $Q$, that is $V(Qx)=V(x)$ for $x\in\mathbf{R}^{n}$. Clearly, the rotating quasi-periodic solution is periodic if $Q=I$ (identity matrix), anti-periodic if $Q=-I$, subharmonic if $Q^k=I$ for some positive integer $k$, or quasi-periodic if $Q^k\neq I$ for all $k\in\mathbf{N}$.

Throughout the paper we assume $V$ satisfies the following:
\begin{enumerate}
\item [(V1)] $V$ is twice differentiable at $x=0$, $V(0)=0$ and $\nabla V(0)=0$;
\item [(V2)] if $x\in\ker(I-Q)$ is a critical point of $V$, i.e.  $\nabla V(x)=0$, then $V(x)\leq 0$;
\item [(V3)] $V$ satisfies the $Q$ invariance mentioned above.
\end{enumerate}
For a solution $x(t)$ of \eqref{1.1}, the corresponding orbit is the set $x(\mathbf{R})$ and two solutions $x(t)$
and $y(t)$ are geometrically distinct or have different orbits if $x(\mathbf{R})\neq y(\mathbf{R})$.

Let
$$X=\{x\in H^1([0,T],\mathbf{R}^{n}): x(t+T)=Qx(t)\ {\rm for\ all}\ t\in\mathbf{R}\}.$$
Then $X$ is  a Hilbert space with the inner product
$$\langle x,y\rangle=\int_0^T\left((x(t),y(t))+(x'(t),y'(t))\right)\mathrm{d}t,$$
where $(\cdot,\cdot)$ denotes the inner product in $\mathbf{R}^{n}$. Let $|\cdot|$ denote the $2$ norm on $\mathbf{R}^n$ and
$\|\cdot\|$ the norm on $X$.
The linearized operator corresponding to \eqref{1.1} on $X$ is given by
$$Lx=x''+V_{xx}(0)x.$$
It follows from (V3) that
$$V_{xx}(0)Q=QV_{xx}(0).$$
Thus there exists a unitary matrix $P$ such that
\begin{align}\label{1.2}
P^{-1}V_{xx}(0)P=\diag\{\mu_1,\cdots,\mu_n\},
\end{align}
\begin{align}\label{1.3}
P^{-1}QP=\diag\{e^{i\theta_1},\cdots,e^{i\theta_n}\},
\end{align}
where $0\leq\theta_j<2\pi$ for $1\leq j\leq n$.
With a simple calculation, we obtain that the eigenvalues of $L$ on $X$ are
$$\lambda_m^j=\mu_j-\bigg(\frac{\theta_j+2\pi m}{T}\bigg)^2$$
for $m\in\mathbf{Z}$ and $1\leq j\leq n$. Let
$$p_T=\sharp\{(m,j): \lambda_m^j>0,\ \theta_j+2\pi m\neq0,\ 1\leq j\leq n,\ m\in\mathbf{Z}\}.$$

Now we state our main results.
\begin{theorem}\label{theorem1}
Assume $p_T>0$ and $V$ satisfies (V1), (V2), (V3) and
\begin{enumerate}
\item [(V4)] $\nabla V$ is bounded;
\item [(V5)] $V(x)\rightarrow+\infty$ for $x\in \ker(I-Q)$ and $|x|\rightarrow\infty$.
\end{enumerate}
Then system \eqref{1.1} has at least $\frac{p_T}{2}$ geometrically distinct $(Q,T)$-rotating quasi-periodic solutions.
\end{theorem}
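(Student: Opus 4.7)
The plan is to apply the equivariant minimax method to the action functional
\[
f(x) = \frac{1}{2}\int_0^T |x'(t)|^2\,dt - \int_0^T V(x(t))\,dt,\qquad x\in X,
\]
using the $\mathcal{Q}(s)$ index introduced earlier. Condition (V4) makes $f\in C^1(X,\mathbb{R})$, and its critical points are exactly the $(Q,T)$-rotating quasi-periodic solutions of \eqref{1.1}. The time-shift $(\sigma_s x)(t) = x(t+s)$ preserves $X$ (since $(\sigma_s x)(t+T) = Q(\sigma_s x)(t)$) and leaves $f$ invariant, so it is the natural action to feed into the $\mathcal{Q}(s)$ index. Geometrically distinct orbits of \eqref{1.1} are in bijection with distinct shift-orbits of critical points, so it suffices to produce at least $p_T/2$ critical shift-orbits.

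The first step is to verify the Palais--Smale condition. Splitting $X = X_c \oplus X_c^\perp$, where $X_c$ denotes the constants valued in $\ker(I-Q)$, for a PS sequence $x_n = v_n + y_n$ I would pair $f'(x_n)$ with $y_n$ and invoke the Poincaré-type inequality on $X_c^\perp$ (valid because the set $\{|\theta_j + 2\pi m|/T : \theta_j + 2\pi m \neq 0\}$ is bounded away from $0$) together with (V4) to bound $\|y_n\|$. The embedding $X\hookrightarrow C^0$ keeps $\|y_n\|_\infty$ bounded; then if $|v_n|\to\infty$, (V5) combined with the Lipschitz continuity of $V$ forces $\int_0^T V(v_n+y_n)\,dt\to +\infty$, contradicting boundedness of $f(x_n)$. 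Hence $\|x_n\|$ is bounded, and a standard test-and-embed argument upgrades the weak limit to strong convergence.

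Next I would set up the linking geometry. Decompose $X = X^+\oplus X^0\oplus X^-$ along the spectrum of $L$. The restriction $\theta_j + 2\pi m\neq 0$ in the definition of $p_T$ places $X^+$ inside $X_c^\perp$ with real dimension $p_T$, and pairing each complex eigenfunction with its conjugate (via $(m,j)\leftrightarrow(-1-m, j^c)$ where $\theta_{j^c} = 2\pi - \theta_j$) exhibits $X^+$ as a direct sum of $p_T/2$ two-dimensional $\sigma_s$-invariant planes on which the shift acts by rotation. By (V1), $f''(0) = -L$, so on a small sphere $S_r\subset X^+$ one has $\sup_{S_r}f \le -\alpha < 0$; meanwhile $f$ is coercive on $X_c^\perp$ and, by (V5), $f\to -\infty$ on $X_c$, providing the outer barrier needed for a linking configuration. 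For $1\le k\le p_T/2$ set
\[
c_k = \inf_{A\in\mathcal{A}_k}\ \sup_{x\in A} f(x),
\]
where $\mathcal{A}_k$ is the family of closed shift-invariant subsets of $X$ whose $\mathcal{Q}(s)$-pseudo-index relative to the linking data is at least $k$. Standard monotonicity and subadditivity of the $\mathcal{Q}(s)$ index, together with the equivariant deformation lemma (applicable under PS), imply that each $c_k$ is a critical value. Distinct $c_k$ yield geometrically distinct orbits, and a coincidence $c_k = c_{k+1}$ forces the critical set at that level to have $\mathcal{Q}(s)$-index at least $2$, hence to contain infinitely many shift-orbits.

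The main obstacle is handling the two sources of non-compactness simultaneously: the finite-dimensional but $f$-unbounded subspace $X_c$, and the strongly indefinite behavior of $f$ on $X^-$. Constructing the admissible classes $\mathcal{A}_k$ and verifying that the negative-gradient flow preserves them will likely require a truncation on $X_c$ together with a Galerkin-type finite-dimensional reduction in the $X^-$ direction. The decisive quantitative point is the index count: showing that the linking sphere carries $\mathcal{Q}(s)$-index exactly $p_T/2$ is what produces the $p_T/2$ in the conclusion, and it reflects the complex-conjugate pairing structure on $X^+$.
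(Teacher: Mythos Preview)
Your overall strategy---equivariant minimax on $E$ via the $\mathcal{Q}(s)$ index, with Palais--Smale coming from (V4)--(V5)---matches the paper's, and your sketch of (PS) and of the upper bound $\sup_{X^+\cap S_\rho}E\le\gamma<0$ from (V1) are essentially what the paper does.

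The gap is in your last paragraph. The functional $E$ is \emph{not} strongly indefinite: since the leading term $\tfrac12\int_0^T|x'|^2\,\mathrm dt$ is nonnegative, the negative part of $E''(0)$ is precisely the finite-dimensional space $X^+$, and on $\hat X=X_c^\perp$ one has, by the Poincar\'e inequality (Lemma~\ref{lemma3.1}) together with the linear growth of $V$ forced by (V4), a uniform lower bound $E|_{\hat X}\ge -M_8T$. So there is no ``strongly indefinite behavior on $X^-$'' to tame, and no Galerkin reduction in that direction is needed. The only genuine obstruction to $c_j>-\infty$ is the finite-dimensional subspace $X_c=\ker(I-Q)\subset\fix\{\mathcal Q(s)\}$, on which $E\to-\infty$ by (V5).

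The paper resolves this by building $\hat X$ directly into the admissible class: with $\mathfrak B$ the class of equivariant homeomorphisms from Section~2, one takes
\[
\mathcal A_j=\{A\ \text{invariant}:\ \ind(h(A)\cap\hat X)\ge j\ \text{for all}\ h\in\mathfrak B\}.
\]
Every $A\in\mathcal A_j$ then meets $\hat X$, so $\sup_A E\ge\inf_{\hat X}E\ge -M_8T$ gives $c_j>-\infty$ immediately, while the intersection estimate of Lemma~\ref{lem2.9} shows $X^+\cap S_\rho\in\mathcal A_{p_T/2}$, hence $c_j\le\gamma<0$. One further point you omit is the role of (V2): a fixed point of $\mathcal Q(s)$ in $K_{c_j}$ would be a constant in $\ker(I-Q)$ with $V\le0$, hence $E\ge0$, contradicting $c_j<0$; this is precisely what ensures $K_{c_j}\cap\fix\{\mathcal Q(s)\}=\emptyset$ so that Theorem~\ref{th3.1} and Lemma~\ref{p2} count the critical orbits as nonconstant, geometrically distinct solutions.
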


\begin{remark}
By \eqref{1.2} and \eqref{1.3}, for $P=(\xi_1,\cdots,\xi_n)$, we have $Q\xi_j=e^{i\theta_j}\xi_j$ and
$V_{xx}(0)\xi_j=\mu_j\xi_j$. If $\theta_j\neq0$ or $\pi$, $\xi_j$ is a complex vector and one has
$$Q\bar{\xi}_j=e^{-i\theta_j}\bar{\xi}_j,\ \ \ \ V_{xx}(0)\bar{\xi}_j=\mu_j\bar{\xi}_j.$$
Clearly,  the multiplicity of eigenvalue $\mu_j$ is even. Then there exists a $1\leq p\leq n$ with $p\neq j$ such
that $\theta_p=2\pi-\theta_j$ and $\mu_p=\mu_j$. It follows that $\lambda_m^p=\lambda_{-m-1}^j$, and the number
$\sharp\{(m,r): \lambda_m^r>0,\ \theta_r+2\pi m\neq0,\ r=p, j\}$
is even. When $\theta_j=0$ or $\pi$,  $\sharp\{(m,j): \lambda_m^j>0,\ \theta_j+2\pi m\neq0\}$ is obviously even.
Hence $p_T$ is even and $\frac{p_T}{2}$ is an integer.
\end{remark}

We also have the following.
\begin{theorem}\label{theorem2}
Assume $p_T>0$ and $V$ satisfies (V1), (V2), (V3) and
\begin{enumerate}
\item [(V6)] there are $\beta\in(1,2)$ and $R>0$ such that
$$0<(x,\nabla V(x))\leq\beta V(x)\ \ {\rm for}\ \ |x|>R;$$
\item [(V7)] there are constants $a_1,\ a_2>0$ and $\alpha\in(1,\beta)$ such that $V(x)\geq a_1|x^\bot|^\alpha-a_2$ for all $x\in\mathbf{R}^{n}$, where $x^\bot$ denotes the projection of $x$ on $\ker(I-Q)$.
\end{enumerate}
Then system \eqref{1.1} has at least $\frac{p_T}{2}$ geometrically distinct $(Q,T)$-rotating quasi-periodic solutions.
\end{theorem}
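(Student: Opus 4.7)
The plan is to realize $(Q,T)$-rotating quasi-periodic solutions of \eqref{1.1} as critical points of the Lagrangian action
\[
f(x)=\int_0^T\Bigl(\tfrac{1}{2}|x'(t)|^2-V(x(t))\Bigr)\,\mathrm{d}t,\qquad x\in X,
\]
which, under (V1) and (V3), is $C^1$ and $\mathcal{Q}(s)$-invariant on $X$, and then to count its nontrivial critical $\mathcal{Q}(s)$-orbits with the $\mathcal{Q}(s)$-index introduced earlier in the paper. The eigenvalue analysis preceding the theorem already provides a decomposition $X=X^+\oplus X^0\oplus X^-$ along the modes of $L$, with $\dim X^+=p_T$; on $X^+$ the quadratic part of $f$ at $0$ is negative definite, which yields the usual linking sphere on which $\sup f$ is negative. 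I would then set up pseudo-index min-max values
\[
c_k=\inf_{\ind^\ast(A)\geq k}\sup_{x\in A}f(x),\qquad 1\leq k\leq \tfrac{p_T}{2},
\]
where $\ind^\ast$ is the pseudo-index associated to the $\mathcal{Q}(s)$-index on deformations of sublevel sets of $f$. The index-increment property, combined with (V2) (any constant critical value of $f$ is nonnegative, so that the negative values $c_k$ produce genuinely nonconstant solutions) and the freeness of the $\mathcal{Q}(s)$-action off $\fix(\mathcal{Q}(s))=\ker(I-Q)$, delivers $p_T/2$ geometrically distinct critical orbits. All of this repeats the scheme of Theorem~\ref{theorem1}, so the real task is to re-establish the Palais--Smale condition under (V6) and (V7).

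Given a PS sequence $\{x_k\}\subset X$, I would begin from the identity
\[
\Bigl(1-\tfrac{\beta}{2}\Bigr)\int_0^T|x_k'|^2\,\mathrm{d}t=-\beta f(x_k)+\langle f'(x_k),x_k\rangle+\int_0^T\bigl((x_k,\nabla V(x_k))-\beta V(x_k)\bigr)\,\mathrm{d}t.
\]
By (V6) the last integrand is nonpositive when $|x_k|>R$ and uniformly bounded when $|x_k|\leq R$, which forces $\|x_k'\|_{L^2}^2\leq C(1+\|x_k\|)$ and hence $\int_0^T V(x_k)\,\mathrm{d}t\leq C(1+\|x_k\|)$. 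Splitting $x_k=c_k+\tilde x_k$ with $c_k\in\ker(I-Q)$ constant and $\tilde x_k$ in the $H^1$-orthogonal complement, Poincar\'e's inequality on $\tilde X$ yields $\|\tilde x_k\|\leq C\|x_k'\|_{L^2}$, so $\|x_k\|^2\leq T|c_k|^2+C(1+\|x_k\|)$, whence $\|x_k\|\leq C(1+|c_k|)$; everything thus reduces to bounding $|c_k|$. Writing $x_k^\bot=c_k+\tilde x_k^\bot$ and applying the elementary inequality $|a+b|^\alpha\geq 2^{1-\alpha}|a|^\alpha-|b|^\alpha$ together with H\"older's inequality, (V7) gives
\[
C(1+\|x_k\|)\geq\int_0^T V(x_k)\,\mathrm{d}t\geq 2^{1-\alpha}a_1 T|c_k|^\alpha-C(1+\|x_k\|)^{\alpha/2}-a_2 T.
\]
Substituting $\|x_k\|\leq C(1+|c_k|)$ and using $\alpha/2<1$ produces $|c_k|^\alpha\leq C(1+|c_k|)$, which forces $|c_k|$, and therefore $\|x_k\|$, to be bounded because $\alpha>1$. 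Standard weak-to-strong compactness in $X$ then extracts a convergent subsequence and (PS) follows.

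The main obstacle is the ``wrong'' sign in (V6): with $\beta<2$ one cannot extract an $\|x'\|_{L^2}$ bound directly from $\beta f(x)-\langle f'(x),x\rangle$ as in the standard super-quadratic Ambrosetti--Rabinowitz argument, so the Palais--Smale estimate has to couple (V6) with the directional coercivity (V7) and exploit $\alpha>1$ to close the bootstrap between $|c_k|$ and $\|x_k\|$. Once (PS) is in place, the linking geometry on $X^+$, the $\mathcal{Q}(s)$-equivariant deformation lemma, the $\ind^\ast$ computation, and the passage from critical $\mathcal{Q}(s)$-orbits to geometrically distinct rotating quasi-periodic orbits carry over verbatim from the proof of Theorem~\ref{theorem1}, since none of those steps relied on the boundedness of $\nabla V$ from (V4).
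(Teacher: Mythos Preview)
Your proposal is correct and follows essentially the same route as the paper: the only new work is the Palais--Smale estimate, and your bootstrap (first bound $\|x_k'\|_{L^2}^2$ by $C(1+\|x_k\|)$ via the identity combining $f$ and $\langle f',x_k\rangle$ with (V6), then control the constant component in $\ker(I-Q)$ through (V7) and close using $\alpha>1$) is equivalent to the paper's, which bounds $|\bar x_k|$ directly from $\int_0^T|x_k^\bot|^\alpha$ by H\"older on the mean value rather than your pointwise inequality $|a+b|^\alpha\geq 2^{1-\alpha}|a|^\alpha-|b|^\alpha$. One small imprecision: the lower bound $c_j>-\infty$ in the proof of Theorem~\ref{theorem1} \emph{does} use (V4) (to get $V(x)\leq M_8+\tfrac{M_0}{2}|x|^2$), so the steps do not carry over ``verbatim''; however, (V6) yields $V(x)\leq a_3|x|^\beta+a_4$ with $\beta<2$, and since $\tfrac{M_0}{2}r^2-a_3r^\beta$ is bounded below, the same argument on $\hat X$ via Lemma~\ref{lemma3.1} gives the required lower bound.
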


\begin{remark}
In Theorem \ref{theorem1} and Theorem \ref{theorem2}, $V$ is sub-quadratic in the sense that it grows less than $|x|^2$. When $Q=I$ our results are consistent with Theorem 4.1 and Theorem 4.2 of Benci \cite{Benci}.
\end{remark}

When $\det(I-Q)\neq0$, assumption (V2) is contained in (V1) and (V7) contained in (V6). Hence we have the following.
\begin{corollary}\label{co1}
Assume $\det(I-Q)\neq0$, $p_T>0$  and $V$ satisfies (V1), (V3) and (V6).
Then system \eqref{1.1} has at least $\frac{p_T}{2}$ geometrically distinct $(Q,T)$-rotating quasi-periodic solutions.
\end{corollary}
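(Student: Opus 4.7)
The plan is to reduce the corollary to Theorem \ref{theorem2} by verifying, under the extra hypothesis $\det(I-Q)\neq 0$, that the two missing assumptions (V2) and (V7) follow automatically from the remaining ones. Since $\det(I-Q)\neq 0$ is equivalent to $\ker(I-Q)=\{0\}$, both of these assumptions collapse to nearly trivial statements.

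First, I would dispose of (V2). The set $\ker(I-Q)$ consists only of the origin, so the only candidate critical point $x\in\ker(I-Q)$ is $x=0$. By (V1) we have $V(0)=0$, so the required inequality $V(x)\leq 0$ on the critical set of $V$ inside $\ker(I-Q)$ holds vacuously beyond this single point where it holds with equality. Thus (V2) is a consequence of (V1).

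Next, I would check (V7). Again because $\ker(I-Q)=\{0\}$, the projection $x^\perp$ of any $x\in\mathbf{R}^n$ onto $\ker(I-Q)$ vanishes, and the inequality $V(x)\geq a_1|x^\perp|^\alpha-a_2$ reduces to the lower bound $V(x)\geq -a_2$. It therefore suffices to show $V$ is bounded below. The first inequality in (V6) forces $V(x)>0$ outside the ball $\{|x|\leq R\}$ (one may integrate the radial derivative, or simply note that any minimum of $V$ lying outside this ball would give $(x,\nabla V(x))=0$, contradicting the strict positivity in (V6)). On the closed ball $\{|x|\leq R\}$, $V$ is continuous and hence bounded, so a lower bound $-a_2$ exists globally. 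Choosing any $\alpha\in(1,\beta)$ and any $a_1>0$ then yields (V7).

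With (V2) and (V7) secured, all hypotheses of Theorem \ref{theorem2} are in force, and the conclusion on the existence of at least $p_T/2$ geometrically distinct $(Q,T)$-rotating quasi-periodic solutions follows at once. There is no real obstacle here beyond a careful reading of the hypotheses; the only step that merits a line of argument is verifying the global lower bound of $V$ from (V6), and this amounts to combining the compactness of $\{|x|\leq R\}$ with the sign information $(x,\nabla V(x))>0$ outside the ball.
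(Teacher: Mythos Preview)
Your reduction to Theorem \ref{theorem2} via $\ker(I-Q)=\{0\}$ is exactly the paper's approach; the paper merely asserts that (V2) is contained in (V1) and (V7) in (V6) without further detail, and your argument fills this in correctly. One small wording issue: the claim $V(x)>0$ for $|x|>R$ does not follow from the \emph{first} inequality in (V6) alone but from the chain $0<(x,\nabla V(x))\le\beta V(x)$, though your radial-integration remark already suffices for the lower bound you actually need.
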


\begin{remark}
When $Q=I$, coercive assumptions (V5) or (V7) are required to obtain the existence of periodic solutions. Corollary \ref{co1} indicates that the coercive condition can be replaced by some invariance, such as $V(x)=V(-x)$.
\end{remark}
To prove Theorem \ref{theorem1} and Theorem \ref{theorem2}, we introduce the $\mathcal{Q}(s)$ index and apply the Ljusternik-Schnirelmann theory of critical points. The $\mathcal{Q}(s)$ index is a development of $S^1$ index which is a powerful tool to study periodic solutions of Hamiltonian system. For literature, see \cite{Ekeland,Moser}.

The paper is organized as follows. In section 2 we introduce the definition of $\mathcal{Q}(s)$ index and show the properties that will be used in the proof of main results. In sections 3 and 4 we give the proof of Theorem \ref{theorem1} and Theorem \ref{theorem2} respectively following the method from Benci \cite{Benci} on periodic solutions.

\section{$\mathcal{Q}(s)$ index}
In this section, we introduce a new index to rotating quasi-periodic orbits. First, we recall the concept of index due to Rabinowitz \cite{ra-in}.

Suppose that $\mathcal{E}$ is a Banach space with a group $\mathfrak{g}$ acting on it. Set
$$\fix\mathfrak{g}=\{z\in\mathcal{E}:gz=z,\ {\rm for\ all}\ g\in\mathfrak{g}\}.$$
Let
$$\mathfrak{\Gamma}=\{\Gamma\subset\mathcal{E}\setminus\{0\}: g(\Gamma)\subset\Gamma\ {\rm for\ all}\ g\in\mathfrak{g}\}$$
be the set of $\mathfrak{g}$ invariant subsets of $\mathcal{E}\setminus\{0\}$.
\begin{definition}\label{def1}
An index for $(\mathcal{E},\mathfrak{g})$ is a mapping $i:\mathfrak{\Gamma}\rightarrow \mathbf{N}\cup\{\infty\}$ such that for
all $\Gamma_1,\Gamma_2\in\mathfrak{\Gamma}$,
\begin{enumerate}
\item[{\rm(i)}] Normalization: if $z\notin\fix\mathfrak{g}$, $i(\cup_{g\in\mathfrak{g}}gz)=1$;

\item[{\rm(ii)}] Mapping property: if $R: \Gamma_1\rightarrow\Gamma_2$ is continuous and equivariant which means
$Rg=gR$ for all $g\in\mathfrak{g}$, then
$$i(\Gamma_1)\leq i(\Gamma_2);$$

\item[{\rm(iii)}] Monotonicity property: If $\Gamma_1\subset\Gamma_2$, then $i(\Gamma_1)\leq i(\Gamma_2)$;

\item[{\rm(iv)}] Continuity property:  if $\Gamma_1$ is compact and $\Gamma_1\cap\fix\mathfrak{g}=\emptyset$, then
$i(\Gamma_1)<\infty$ and there exists a neighborhood $D\in\mathfrak{\Gamma}$ of $\Gamma_1$ such that
$$i(D)=i(\Gamma_1);$$

\item[{\rm(v)}]  Subadditivity: $i(\Gamma_1\cup\Gamma_2)\leq i(\Gamma_1)+i(\Gamma_2)$.
\end{enumerate}
\end{definition}

For $s\in\mathbf{R}$, consider the group action on $X$:
$$\mathcal{Q}(s)x(t)=x(t+s).$$
Clearly, $X$ is $\mathcal{Q}(s)$ invariant, that is, if $x\in X$, then $\mathcal{Q}(s)x\in X$ for all $s\in\mathbf{R}$.

Now we give the $\mathcal{Q}(s)$ index:
\begin{definition}\label{def2}
A $\mathcal{Q}(s)$ index $\ind \Gamma$ of an invariant subset $\Gamma$ of $X$ is the smallest integer $k$ such that there exists a
$$\Phi=(\Phi_1^\top,\cdots,\Phi_m^\top)^\top\in C(\Gamma,\mathbf{C}^{k}\setminus\{0\})$$
with $\Phi_j\in C(\Gamma,\mathbf{C}^{k_j})$ and
\begin{eqnarray}\label{2.4}
\Phi_j(\mathcal{Q}(s)x)=e^{i\frac{2\pi n_j+\theta_{p_j}}{T}s}\Phi_j(x)\ \ \forall s\in\mathbf{R},\  x\in\Gamma,
\end{eqnarray}
where $k_j\in\mathbf{N}_+$, $k_1+\cdots+k_m=k$, $n_j\in\mathbf{Z}$, $1\leq p_j\leq n$ and $2\pi n_j+\theta_{p_j}\neq 0$ for $1\leq j\leq m$.
If such a $\Phi$ does not exist, define $\ind \Gamma=\infty$, and if $\Gamma=\emptyset$, define $\ind \Gamma=0$.
\end{definition}

We need to show:
\begin{lemma}\label{p1}
The $\mathcal{Q}(s)$ index is one in the sense of Definition \ref{def1}.
\end{lemma}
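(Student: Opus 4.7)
The plan is to verify each of the five axioms of Definition~\ref{def1} for the group $\mathfrak{g}=\{\mathcal{Q}(s):s\in\mathbf{R}\}$ acting on $X$. The unifying tool is the Fourier expansion inherited from the simultaneous diagonalization \eqref{1.2}--\eqref{1.3}: with $P=(\xi_1,\ldots,\xi_n)$, every $x\in X$ admits
\begin{equation*}
x(t)=\sum_{j=1}^{n}\sum_{m\in\mathbf{Z}} c_{m,j}(x)\,\xi_j\,e^{i(2\pi m+\theta_j)t/T},
\end{equation*}
and the coefficient functionals $c_{m,j}:X\to\mathbf{C}$ are continuous, linear, and satisfy
\begin{equation*}
c_{m,j}(\mathcal{Q}(s)x)=e^{i(2\pi m+\theta_j)s/T}c_{m,j}(x).
\end{equation*}
Hence each $c_{m,j}$ with $2\pi m+\theta_j\neq 0$ is itself an admissible one-dimensional block for \eqref{2.4}. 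I also note that $\fix\mathfrak{g}$ consists exactly of those $x$ whose only non-zero coefficients have $2\pi m+\theta_j=0$, and that $\mathcal{Q}(s)$ acts as a Hilbert isometry on $X$ (the $H^1$-integrand being invariant under time shift, via $x(t+T)=Qx(t)$ with $Q$ orthogonal).

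Axioms (i)--(iii) are then short. For Normalization, any $z\notin\fix\mathfrak{g}$ has some non-zero $c_{m_0,j_0}(z)$ with $2\pi m_0+\theta_{j_0}\neq 0$; the single-block map $\Phi=c_{m_0,j_0}$ is non-vanishing on the entire orbit by the character relation, giving $\ind(\bigcup_{s}\mathcal{Q}(s)z)\leq 1$, and non-emptiness forces equality. For the Mapping property, composing the witness $\Phi$ for $\ind\Gamma_2\leq k$ with the equivariant continuous $R:\Gamma_1\to\Gamma_2$ yields a witness for $\Gamma_1$ (equivariance of $R$ passes the character condition through the composition). Monotonicity is the special case $R=\mathrm{id}$.

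For Continuity, the main tool is compactness. Each $x\in\Gamma_1\setminus\fix\mathfrak{g}$ has a non-zero coefficient $c_{m(x),j(x)}(x)$ with $2\pi m(x)+\theta_{j(x)}\neq 0$; a finite subcover of $\Gamma_1$ produces indices $(m_r,j_r)_{r=1}^{l}$ such that $\Phi=(c_{m_r,j_r})_{r=1}^{l}:X\to\mathbf{C}^{l}$ is non-vanishing on all of $\Gamma_1$, whence $\ind\Gamma_1\leq l<\infty$. Since $\Phi$ is continuous on $X$ and the compact $\Gamma_1$ is separated from its zero set, $\Phi\neq 0$ on an open neighborhood $U\supset\Gamma_1$; the $\mathcal{Q}(s)$-saturation $D=\bigcup_{s\in\mathbf{R}}\mathcal{Q}(s)U$ is open (isometric action), invariant, and still avoids zeros of $\Phi$, so $\ind D\leq l$. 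Monotonicity together with a sufficiently small $U$ then pins down $\ind D=\ind\Gamma_1$.

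The principal obstacle is Subadditivity. Given witnesses $\Phi^{(i)}\in C(\Gamma_i,\mathbf{C}^{k_i}\setminus\{0\})$ for $i=1,2$, I aim to produce continuous equivariant extensions $\tilde\Phi^{(i)}:\Gamma_1\cup\Gamma_2\to\mathbf{C}^{k_i}$ with $\tilde\Phi^{(i)}=\Phi^{(i)}$ on $\Gamma_i$. Their concatenation $(\tilde\Phi^{(1)},\tilde\Phi^{(2)})$ is then non-vanishing on $\Gamma_1\cup\Gamma_2$ (at each point at least one block survives), which will give $\ind(\Gamma_1\cup\Gamma_2)\leq k_1+k_2$. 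The hard step is the character-preserving extension: a bare Tietze extension of each component $\Phi^{(i)}_j$ to $X$ need not respect \eqref{2.4}, so I will equivariantize the extension by averaging against the prescribed character $e^{-i(2\pi n_j+\theta_{p_j})s/T}$ along the $\mathcal{Q}(s)$-orbit and multiplying by an invariant continuous cutoff equal to $1$ on $\Gamma_i$. The technical core is to make this averaging converge despite $\mathbf{R}$ being non-compact, to verify continuity of the resulting extension, and to check that the original values on $\Gamma_i$ are preserved—this last point follows because on $\Gamma_i$ the pre-extended map already obeys the character relation, so the integrand is constant in $s$ and the mean returns the original value.
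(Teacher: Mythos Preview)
Your outline for (i)--(iii) and (v) matches the paper's approach closely: Fourier--coefficient functionals for Normalization, composition for the Mapping property, and Tietze plus character-averaging along orbits for the equivariant extension in Subadditivity. The paper justifies convergence of the mean by observing that $s\mapsto\tilde\Phi_j(\mathcal{Q}(s)x)$ is almost periodic (continuous image of the almost periodic orbit $s\mapsto\mathcal{Q}(s)x$ in $X$), so the Bohr mean exists; this resolves the ``technical core'' you correctly flag.

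There is, however, a genuine gap in your Continuity argument. From the globally defined $l$-component Fourier map you obtain $\ind\Gamma_1\le l$ and, for your invariant neighborhood $D$, $\ind D\le l$; monotonicity gives $\ind\Gamma_1\le\ind D$. These three facts do \emph{not} yield $\ind D=\ind\Gamma_1$, and no amount of shrinking $U$ helps: if $k:=\ind\Gamma_1<l$, you have only $k\le\ind D\le l$. To force $\ind D\le k$ you must produce a $k$-component witness on $D$, which means extending the \emph{minimal} witness $\Phi\in C(\Gamma_1,\mathbf{C}^k\setminus\{0\})$ (defined only on $\Gamma_1$) rather than the cruder Fourier map. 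The paper does exactly this inside (iv): extend each block $\Phi_j$ to $X$ by Tietze, then equivariantize by the same orbit-average
\[
\Psi_j(x)=\lim_{t\to\infty}\frac{1}{t}\int_0^t e^{-i(2\pi n_j+\theta_{p_j})s/T}\,\tilde\Phi_j(\mathcal{Q}(s)x)\,\mathrm{d}s
\]
that you propose for (v). On $\Gamma_1$ the integrand is constant in $s$, so $\Psi=\Phi$ there; continuity of $\Psi$ then gives a $\delta>0$ with $\Psi\neq 0$ on the invariant tube $\Gamma_\delta=\{x:\dist(x,\Gamma_1)\le\delta\}$, whence $\ind\Gamma_\delta\le k$ and equality follows by monotonicity. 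In short: move your averaging-extension machinery from (v) into (iv), and then simply cite it again for (v). The invariant cutoff you mention is unnecessary once the extension is global.
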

\begin{proof}
{\rm (i)}  For $x\in X\setminus\fix\{\mathcal{\mathcal{Q}}(s)\}$, consider the function
\begin{eqnarray}\label{02.5}
y(t)=(y_1(t),\cdots,y_n(t))^\top=P^{-1}x(t).
\end{eqnarray}
Then one has
\begin{eqnarray}\label{2.5}
y(t+T)=P^{-1}x(t+T)=P^{-1}QPP^{-1}x(t)=(e^{i\theta_1}y_1(t),\cdots,e^{i\theta_n}y_n(t))^\top.
\end{eqnarray}
Hence for $1\leq j\leq n$,
$$e^{-i\frac{\theta_{j}}{T}(t+T)}y_j(t+T)=e^{-i\frac{\theta_{j}}{T}t}y_j(t)\ \ t\in\mathbf{R}.$$
Thus there exist a component $y_{j_0}(t)$ and $n_{j_0}\in\mathbf{Z}$ with $2\pi n_{j_0}-\theta_{{j_0}}\neq0$, such that
\begin{eqnarray}
\int_0^Te^{i\frac{2\pi n_{j_0}-\theta_{j_0}}{T}s}y_{j_0}(s)\mathrm{d}s\neq0.
\end{eqnarray}
Now for $\tilde{x}\in\Gamma_0=\{\mathcal{Q}(s)x: s\in\mathbf{R}\}$, assume $\tilde{x}(t)=x(t+s_0)$ for some $s_0\in \mathbf{R}$, and
$\tilde{y}(t)=P^{-1}\tilde{x}(t)$.
Let
$$\Phi(\tilde{x})=\int_0^Te^{i\frac{2\pi n_{j_0}-\theta_{j_0}}{T}s}\tilde{y}_{j_0}(s)\mathrm{d}s.$$
Then $\Phi\in C(\Gamma_0,\mathbf{C}\setminus\{0\})$,
and
$$\Phi(\mathcal{Q}(s)\tilde{x})=e^{i\frac{-2\pi n_{j_0}+\theta_{j_0}}{T}s}\Phi(\tilde{x}),$$
proving (i).

{\rm (ii)} If $\ind\Gamma_2=\infty$, the result is obvious. If $\ind\Gamma_2=k<\infty$, there exists a
$$\Phi=(\Phi_1^\top,\cdots,\Phi_m^\top)^\top\in C(\Gamma_2,\mathbf{C}^{k}\setminus\{0\})$$
such that $\Phi_j\in C(\Gamma_2,\mathbf{C}^{k_j})$ and
\begin{eqnarray}\label{3.3}
\Phi_j(\mathcal{Q}(s)x)=e^{i\frac{2\pi n_j+\theta_{p_j}}{T}s}\Phi_j(x)\ \ \forall s\in\mathbf{R},\  x\in\Gamma_2.
\end{eqnarray}
Define $\tilde{\Phi}(x)=\Phi\circ R(x)$ for $x\in\Gamma_1$. Then
\begin{align*}
\tilde{\Phi}_j(\mathcal{Q}(s)x)&=\Phi_j\circ R(\mathcal{Q}(s)x)=\Phi_j(\mathcal{Q}(s)R(x))\\
&=e^{i\frac{2\pi n_j+\theta_{p_j}}{T}s}\Phi_j\circ R(x)\\
&=e^{i\frac{2\pi n_j+\theta_{p_j}}{T}s}\tilde{\Phi}(x),
\end{align*}
which yields $\ind\Gamma_1\leq\ind\Gamma_2$.

{\rm (iii)} Since the inclusion map is equivariant, the  monotonicity property is obvious.

{\rm (iv)} When $\Gamma_1=\emptyset$, the result is obvious. When $\Gamma_1\neq\emptyset$, for each $x_0\in\Gamma_1$ and $y_0=P^{-1}x_0$, as in the proof of (i)
there exist a component $y_{j_0}(t)$ and $n_{0}\in\mathbf{Z}$ with $2\pi n_{0}-\theta_{{j_0}}\neq0$, such that
\begin{eqnarray}
\int_0^Te^{i\frac{2\pi n_{0}-\theta_{j_0}}{T}s}y_{j_0}(s)\mathrm{d}s\neq0.
\end{eqnarray}
Clearly, there exists a neighbourhood $U_{x_0}$ of $x_0$ such that for each $\tilde{x}\in U_{x_0}$ and $\tilde{y}=P^{-1}\tilde{x}$,
\begin{eqnarray}
\int_0^Te^{i\frac{2\pi n_{0}-\theta_{j_0}}{T}s}\tilde{y}_{j_0}(s)\mathrm{d}s\neq0.
\end{eqnarray}
Since $\Gamma_1$ is compact, there exist finite $U_{x_l}$ for $1\leq l\leq r$ such that
$$\Gamma_1\subset\cup_{l=1}^r U_{x_l}.$$
Let
$$\Upsilon(x)=(\Upsilon_1(x),\cdots,\Upsilon_r(x))^\top,$$
where $\Upsilon_l(x)=\int_0^Te^{i\frac{2\pi n_{l}-\theta_{j_l}}{T}s}y_{j_l}(s)\mathrm{d}s$, for $1\leq l\leq r$.
Then for each $x\in\Gamma_1$, $\Upsilon(x)\neq0$ and
$$\Upsilon_l(\mathcal{Q}(s)x)=e^{i\frac{-2\pi n_l+\theta_{j_l}}{T}s}\Upsilon_l(x),$$
for $1\leq l\leq r$.

Assume $\ind\Gamma_1=k$.
Then there exists a mapping
$$\Phi=(\Phi_1^\top,\cdots,\Phi_m^\top)^\top\in C(\Gamma_1,\mathbf{C}^{k}\setminus\{0\})$$
satisfying Definition \ref{def2}. Since $\Gamma_1$ is a closed set, by Tietze's theorem there exists a continuous extension $\tilde{\Phi}_j$ of $\Phi_j$ over $X$ for each $1\leq j\leq m$. Now define a mapping $\Psi=(\Psi_1^\top,\cdots,\Psi_m^\top)^\top$ by
\begin{eqnarray}\label{3.24}
\Psi_j(x)=\lim\limits_{t\rightarrow\infty}\frac{1}{t}\int_{0}^{t}e^{i\frac{-2\pi n_j-\theta_{p_j}}{T}s}\tilde{\Phi}_j(\mathcal{Q}(s)x)\mathrm{d}s.
\end{eqnarray}
Since $\tilde{\Phi}_j(\mathcal{Q}(s)x)$ is an almost periodic function on $s$, \eqref{3.24} is well defined.
Clearly, $\Psi$ is continuous, $\Psi(x)=\Phi(x)$ for $x\in\Gamma_1$ and
$$\Psi_j(\mathcal{Q}(s)x)=e^{i\frac{2\pi n_j+\theta_{p_j}}{T}s}\Psi(x)\ \ \forall x\in X,\ s\in\mathbf{R}.$$
Let
$$\Gamma_\delta=\{x\in X, \dist(x,\Gamma_1)\leq\delta\}.$$
Then it is easy to see that $\Gamma_\delta$ is $\mathcal{Q}(s)$ invariant. Since $\Psi$ is continuous and $\Psi(x)\neq0$ for $x\in\Gamma_1$, there exists a $\delta>0$ such that $\Psi(x)\neq0$ for $x\in\Gamma_\delta$ which yields $\ind \Gamma_\delta\leq k$. By monotonicity one has $\ind \Gamma_\delta\geq k$. Thus $\ind \Gamma_1=\ind \Gamma_\delta.$

{\rm (v)}
If $\ind\Gamma_1=\infty$ or $\ind\Gamma_2=\infty$, the result is obvious. Assume $\ind \Gamma_1=k_1<\infty$ and $\ind\Gamma_2=k_2<\infty$, then there exist
$$\Phi^j=((\Phi^j_1)^\top,\cdots,(\Phi^j_{m_j})^\top)^\top\in C(\Gamma_j,\mathbf{C}^{k_j}\setminus\{0\})$$
for $j=1,\ 2$ satisfying \eqref{2.4}. Similar to the proof of (iv), there exist continuous extensions $\Psi_j(x)$ of $\Phi^j(x)$ for $j=1, 2$ over $X$ satisfying \eqref{2.4}.  Define
$$\Psi:X\rightarrow\mathbf{C}^{k_1+k_2}$$
by
$$\Psi(x)=(\Psi_1^\top(x),\Psi_2^\top(x))^\top.$$
Then $\Psi(x)\neq0$ for every $x\in \Gamma_1\cup\Gamma_2$ and satisfies \eqref{2.4}, yielding
$\ind(\Gamma_1\cup\Gamma_2)\leq k_1+k_2$.
\end{proof}

A $\mathcal{Q}(s)$-orbit is the set $\{\mathcal{Q}(s)x: s\in\mathbf{R}\}$ for some $x\in X$. We have the following.
\begin{lemma}\label{p2}
Assume $\Gamma$ is an invariant subset of $X$ such that
$$\Gamma\cap\fix\{\mathcal{Q}(s)\}=\emptyset.$$
If $\ind \Gamma=k>0$, then there
exist at least $k$ $\mathcal{Q}(s)$-orbits on $\Gamma$.
\end{lemma}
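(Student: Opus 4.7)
The plan is to argue by contradiction, reducing everything to the axioms of Definition~\ref{def1} that were just verified for the $\mathcal{Q}(s)$ index in Lemma~\ref{p1}. Suppose, for contradiction, that $\Gamma$ contains strictly fewer than $k$ distinct $\mathcal{Q}(s)$-orbits, say $\mathcal{O}_1,\dots,\mathcal{O}_r$ with $r\le k-1$. Since distinct orbits are disjoint and every point of $\Gamma$ lies in its own orbit, we have the decomposition
\[
\Gamma=\bigcup_{l=1}^{r}\mathcal{O}_l,\qquad \mathcal{O}_l=\{\mathcal{Q}(s)x_l:s\in\mathbf{R}\}
\]
for some representatives $x_l\in\Gamma$.

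Next I would verify that each orbit $\mathcal{O}_l$ is an admissible member of $\mathfrak{\Gamma}$. Since $\Gamma\subset X\setminus\{0\}$ (being in $\mathfrak{\Gamma}$) and $\mathcal{Q}(s)0=0$, none of the orbits can contain the zero function, so $\mathcal{O}_l\in\mathfrak{\Gamma}$. Moreover, the hypothesis $\Gamma\cap\fix\{\mathcal{Q}(s)\}=\emptyset$ guarantees that $x_l\notin\fix\{\mathcal{Q}(s)\}$, so the normalization property~(i) of Definition~\ref{def1} applies and yields
\[
\ind\mathcal{O}_l=1\qquad\text{for each }1\le l\le r.
\]

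Finally I would invoke the subadditivity property~(v), extended inductively from two sets to finitely many, to conclude
\[
\ind\Gamma=\ind\Bigl(\bigcup_{l=1}^{r}\mathcal{O}_l\Bigr)\le\sum_{l=1}^{r}\ind\mathcal{O}_l=r\le k-1,
\]
contradicting the assumption $\ind\Gamma=k$. Hence $\Gamma$ must contain at least $k$ distinct $\mathcal{Q}(s)$-orbits.

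The argument is essentially a direct application of the axioms, so there is no serious analytic obstacle here; the only point that requires care is the bookkeeping about orbits. The substantive content — that the normalization and subadditivity axioms genuinely hold for the $\mathcal{Q}(s)$ index in the sense of Definition~\ref{def2} — has already been carried out in Lemma~\ref{p1}, so this lemma is essentially a formal consequence.
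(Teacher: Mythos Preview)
Your argument is correct and is in fact the cleaner way to organize the proof: you reduce Lemma~\ref{p2} entirely to the abstract index axioms (normalization and subadditivity) already verified in Lemma~\ref{p1}. The paper's proof proceeds along the same contradiction but does not quote the axioms; instead it reproduces the explicit construction from the proof of normalization, building for each orbit representative $x^l$ a nonvanishing Fourier-type functional $\Phi_l(x)=\int_0^T e^{i\frac{-2\pi n_l-\theta_{j_l}}{T}s}y_{j_l}(s)\,\mathrm{d}s$ and then stacking these into a single map $\Phi\in C(\Gamma,\mathbf{C}^{k-1}\setminus\{0\})$ satisfying \eqref{2.4}. That is exactly what your invocation of ``normalization plus subadditivity'' does under the hood, so the two proofs are equivalent; yours has the advantage of making clear that the lemma is a purely formal consequence of the index axioms, while the paper's version is more self-contained at the cost of repeating the earlier construction.
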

\begin{proof}
Assume $\Gamma$ only contains $k-1$ orbits: $\mathcal{Q}(s)x^1, \cdots, \mathcal{Q}(s)x^{k-1}$.
As in the proof of Lemma \ref{p1}, for each $x^l\in\Gamma$ and $y^l=P^{-1}x^l$, there exist a component $y^l_{j_l}$ and $n_l\in\mathbf{Z}$ with
$2\pi n_l+\theta_{j_l}\neq0$ such that
$$\int_0^Te^{i\frac{-2\pi n_{l}-\theta_{j_l}}{T}s}y_{j_l}(s)\mathrm{d}s\neq0,$$
for $1\leq l\leq k-1$.
Now let
$$\Phi=(\Phi_1,\cdots,\Phi_{k-1})^\top,$$
where
$\Phi_l(x)=\int_0^Te^{i\frac{-2\pi n_{l}-\theta_{j_l}}{T}s}y_{j_l}(s)\mathrm{d}s$.
Then
$\Phi\in C(\Gamma,\mathbf{C}^{k-1}\setminus\{0\})$
and
$$\Phi_l(\mathcal{Q}(s)x)=e^{i\frac{2\pi n_{l}+\theta_{j_l}}{T}s}\Phi_l(x)\ \  \forall s\in\mathbf{R},\ x\in\Gamma.$$
Thus $\ind\Gamma\leq k-1$ and the assumption is not true, proving the lemma.
\end{proof}

Let $\mathfrak{B}$ denote a class of homeomorphisms $h: X\rightarrow X$ satisfying the following conditions,
\begin{enumerate}
\item [(a)] $h$ is equivariant and $h(0)=0$;
\item [(b)] given a compact set $\Gamma$ contained in a finite dimensional invariant space $Y$ and a constant $\varepsilon>0$, there exist a
finite dimensional invariant space $Z\supset Y$ and an  equivariant homeomorphism
$\tilde{h}: Z\rightarrow Z$ such that $\|h(z)-\tilde{h}(z)\|\leq\varepsilon$ for all $z\in\Gamma$.
\end{enumerate}
It is easy to see that if $h_1,\ h_2\in\mathfrak{B}$, then $h_1\circ h_2\in\mathfrak{B}$. Denote
$$\hat{X}=\left\{x\in X;\lim\limits_{t\rightarrow\infty}\frac{1}{t}\int_0^t x(s)\mathrm{d}s=0\right\},$$
Assume $E$ is a real continuous function on $X$. For $j\geq 1$, define
\begin{eqnarray*}
\mathcal{A}_j=\{A\subset X: \ A\ {\rm is}\ \mathcal{Q}(s)\ {\rm invariant\ and}\ \ind (h(A)\cap\hat{X})\geq j\ {\rm\ for\ all}\ h\in\mathfrak{B}\},
\end{eqnarray*}
\begin{eqnarray}\label{02.15}
c_j=\inf\limits_{A\in\mathcal{A}_j}\sup\limits_{A}E.
\end{eqnarray}
It is obviously that $\mathcal{A}_j\subset\mathcal{A}_{j-1}$ for $j\geq2$. Then
\begin{eqnarray}\label{2.14}
-\infty\leq c_1\leq c_2\leq\cdots\leq+\infty.
\end{eqnarray}
Set
$$K_c=\{x\in X: E'(x)=0\ \ {\rm and}\ E(x)=c\},$$
$$E^c=\{x\in X: E(x)\leq c\},$$
where $c$ is a constant.

Now we give an important property of the $\mathcal{Q}(s)$ index.
\begin{theorem}\label{th3.1}
Assume $E\in C^1(X,\mathbf{R})$ is $\mathcal{Q}(s)$ invariant and satisfies (P.-S.) (Palais-Smale condition). If $-\infty<c_j<+\infty$ and $K_{c_j}\cap\fix\{\mathcal{Q}(s)\}=\emptyset$, then $c_j$
is a critical value of $E$. Moreover, if $c_i=c_j$ for some $i\leq j$, then
$$\ind K_{c_i}\geq j-i+1.$$
\end{theorem}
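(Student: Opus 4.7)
I use the standard Ljusternik-Schnirelmann minimax scheme, adapted to the $\mathcal{Q}(s)$-index and to the admissible class $\mathfrak{B}$. The pivot is an equivariant deformation lemma: given $c = c_j$, a small $\varepsilon_0 > 0$, and an open $\mathcal{Q}(s)$-invariant neighbourhood $N$ of $K_c$, I want to produce $\eta \in \mathfrak{B}$ and $\varepsilon \in (0, \varepsilon_0)$ with
$$\eta(E^{c+\varepsilon} \setminus N) \subset E^{c-\varepsilon}.$$
I build a pseudo-gradient vector field $W$ for $E$ on $\{x: E'(x) \neq 0\}$ via the classical partition-of-unity argument and make it $\mathcal{Q}(s)$-equivariant by averaging along the group using the almost-periodic mean, exactly as in the proof of Lemma \ref{p1} (iv). The (P.-S.) condition together with $K_c \cap \fix\{\mathcal{Q}(s)\} = \emptyset$ gives a uniform positive lower bound on $\|W\|$ on $E^{-1}[c-\varepsilon_0, c+\varepsilon_0] \setminus N$, so a suitably cut-off time-$1$ flow of $\dot x = -W(x)$ yields an equivariant homeomorphism satisfying the displayed inclusion. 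The delicate point, and the main obstacle, is checking that $\eta$ lies in $\mathfrak{B}$: on any compact subset of a finite-dimensional $\mathcal{Q}(s)$-invariant space $Y$ the time-$1$ map must be uniformly approximable by equivariant homeomorphisms of a slightly larger finite-dimensional invariant space $Z$. I plan to carry this out by approximating $W$ in sup norm by equivariant vector fields supported in finite-dimensional invariant blocks coming from the spectral decomposition \eqref{1.2}--\eqref{1.3}, paralleling Benci \cite{Benci}.

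\textbf{Criticality of $c_j$.} Suppose $c = c_j$ is not critical, so $K_c = \emptyset$; apply the deformation with $N = \emptyset$. By \eqref{02.15} choose $A \in \mathcal{A}_j$ with $\sup_A E < c + \varepsilon$. For any $h \in \mathfrak{B}$ the composition $h \circ \eta$ still lies in $\mathfrak{B}$, so
$$\ind(h(\eta(A)) \cap \hat{X}) = \ind((h \circ \eta)(A) \cap \hat{X}) \geq j,$$
which shows $\eta(A) \in \mathcal{A}_j$. But $\eta(A) \subset E^{c-\varepsilon}$ forces $c_j \leq c - \varepsilon$, a contradiction.

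\textbf{Multiplicity estimate.} Assume $c_i = \cdots = c_j = c$ and, for contradiction, $\ind K_c \leq j - i$. The (P.-S.) condition makes $K_c$ compact, and the continuity property (iv) of Lemma \ref{p1} furnishes an open $\mathcal{Q}(s)$-invariant neighbourhood $N$ of $K_c$ with $\ind N \leq j - i$ (and $0 \notin N$, since $0 \in \fix\{\mathcal{Q}(s)\}$ is disjoint from $K_c$). Apply the deformation to this $N$, pick $A \in \mathcal{A}_j$ with $\sup_A E < c + \varepsilon$, and set $B = \overline{A \setminus N}$, which is $\mathcal{Q}(s)$-invariant and, since $N$ is open, contained in $X \setminus N$. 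Because $A \subset B \cup N$, for every $h \in \mathfrak{B}$,
$$h(A) \cap \hat{X} \subset (h(B) \cap \hat{X}) \cup (h(N) \cap \hat{X}),$$
so monotonicity, subadditivity, and the mapping property applied to $h^{-1}\colon h(N)\to N$ give
$$j \leq \ind(h(A) \cap \hat{X}) \leq \ind(h(B) \cap \hat{X}) + \ind(h(N) \cap \hat{X}) \leq \ind(h(B) \cap \hat{X}) + (j - i),$$
hence $B \in \mathcal{A}_i$. As in the previous step, $\eta(B) \in \mathcal{A}_i$, and since $B \subset X \setminus N$ we have $\eta(B) \subset E^{c-\varepsilon}$. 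This yields $c_i \leq c - \varepsilon < c$, contradicting $c_i = c$ and completing the proof.
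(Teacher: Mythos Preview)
Your proposal is correct and follows essentially the same route as the paper: an equivariant pseudo-gradient (obtained by averaging along the $\mathcal{Q}(s)$-action), an equivariant deformation $\eta$ that belongs to $\mathfrak{B}$ via finite-dimensional approximation of the flow, and the excision/subadditivity argument showing that removing a neighbourhood of $K_c$ of index $\leq j-i$ from $A\in\mathcal{A}_j$ yields a set in $\mathcal{A}_i$. The only cosmetic differences are that the paper packages the excision step as a separate lemma (Lemma~\ref{lem2.3}) and treats criticality as the special case $i=j$ of the multiplicity estimate, whereas you argue both directly; also note, as the paper does in Lemma~\ref{lem3}(iv), that membership of $\eta$ in $\mathfrak{B}$ (in particular $\eta(0)=0$) uses $0\notin E^{-1}[c-\epsilon,c+\epsilon]$, which in the applications follows from $E(0)=0$ and $c_j<0$.
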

Before proving Theorem \ref{th3.1}, we introduce the concept of ``pseudo-gradient".
\begin{definition}\label{def3.3}
Assume $Y$ is a Banach space, $\varphi\in C^1(Y,\mathbf{R})$ and
$$\tilde{Y}=\{u\in Y:\varphi'(u)\neq0\}.$$
A pseudo-gradient vector field for $\varphi$ on $\tilde{Y}$ is a locally Lipschitz continuous mapping $v:\tilde{Y}\rightarrow Y$ such that for every
$u\in\tilde{Y}$,
\begin{eqnarray*}
\|v(u)\|\leq2\|\varphi'(u)\|,
\end{eqnarray*}
\begin{eqnarray*}
\langle v(u),\varphi'(u)\rangle\geq\|\varphi'(u)\|^2.
\end{eqnarray*}
\end{definition}

We need the following.
\begin{lemma}\label{lem3.1}(see \cite{Mawhin}).
Under the assumption of Definition \ref{def3.3}, there exists a pseudo-gradient vector field for $\varphi$ on $\tilde{Y}$.
\end{lemma}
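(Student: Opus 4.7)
The plan is to produce the pseudo-gradient field by a standard patch-and-partition-of-unity construction: first build good "pointwise" candidates on a neighborhood of each $u\in\tilde Y$, then glue them together using a locally Lipschitz partition of unity. I would proceed as follows.

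First, I would construct local pseudo-gradient vectors. Fix $u\in\tilde Y$. Since $\varphi'(u)\neq0$ and $\|\varphi'(u)\|_{Y^*}=\sup_{\|y\|=1}\langle\varphi'(u),y\rangle$, I can pick $w(u)\in Y$ with $\|w(u)\|=1$ and
\begin{equation*}
\langle \varphi'(u),w(u)\rangle>\tfrac{2}{3}\|\varphi'(u)\|.
\end{equation*}
Setting $v_u:=\tfrac{3}{2}\|\varphi'(u)\|\,w(u)$ I obtain a constant vector with $\|v_u\|<2\|\varphi'(u)\|$ and $\langle\varphi'(u),v_u\rangle>\|\varphi'(u)\|^2$. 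Because $\varphi'$ is continuous on $\tilde Y$ and both inequalities are strict at $u$, the constant map $z\mapsto v_u$ still satisfies
\begin{equation*}
\|v_u\|\le 2\|\varphi'(z)\|,\qquad \langle\varphi'(z),v_u\rangle\ge\|\varphi'(z)\|^2
\end{equation*}
for all $z$ in a suitably small open neighborhood $N(u)\subset\tilde Y$ of $u$.

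Next I would globalize by a partition of unity. The family $\{N(u)\}_{u\in\tilde Y}$ is an open cover of $\tilde Y$. Since $\tilde Y$ is an open subset of the Banach space $Y$, it is metric and hence paracompact; thus the cover admits a locally finite open refinement $\{U_\alpha\}_{\alpha\in A}$, with each $U_\alpha\subset N(u_\alpha)$ for some $u_\alpha$. Using the Lipschitz distance functions $d(\cdot,\tilde Y\setminus U_\alpha)$, I define
\begin{equation*}
\rho_\alpha(u):=\frac{d(u,\tilde Y\setminus U_\alpha)}{\sum_{\beta\in A}d(u,\tilde Y\setminus U_\beta)},
\end{equation*}
which by local finiteness is a locally Lipschitz partition of unity subordinate to $\{U_\alpha\}$. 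Then I set
\begin{equation*}
v(u):=\sum_{\alpha\in A}\rho_\alpha(u)\,v_{u_\alpha}.
\end{equation*}
Local finiteness ensures $v$ is a finite sum on a neighborhood of any point, hence locally Lipschitz.

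Finally I would verify the two pseudo-gradient inequalities. For $u\in\tilde Y$, only indices with $u\in U_\alpha\subset N(u_\alpha)$ contribute, so $\|v_{u_\alpha}\|\le 2\|\varphi'(u)\|$ and $\langle\varphi'(u),v_{u_\alpha}\rangle\ge\|\varphi'(u)\|^2$ hold for each active $\alpha$. Taking convex combinations with $\sum_\alpha\rho_\alpha(u)=1$ and $\rho_\alpha(u)\ge0$ gives
\begin{equation*}
\|v(u)\|\le\sum_{\alpha}\rho_\alpha(u)\|v_{u_\alpha}\|\le 2\|\varphi'(u)\|,\quad \langle v(u),\varphi'(u)\rangle=\sum_\alpha\rho_\alpha(u)\langle v_{u_\alpha},\varphi'(u)\rangle\ge\|\varphi'(u)\|^2,
\end{equation*}
which is what we need. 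The only real technicality is the existence of a locally Lipschitz partition of unity on the metric (hence paracompact) space $\tilde Y$; this is the main obstacle but is handled exactly by the distance-function formula above, since locally finite sums of Lipschitz functions are Lipschitz and the normalization is bounded away from zero on any compact set where it is needed.
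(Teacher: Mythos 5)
Your proof is correct and is essentially the standard argument found in Mawhin--Willem (the reference the paper cites in lieu of giving its own proof): choose a near-norming direction at each point to get a local constant candidate, use continuity of $\varphi'$ to push the strict inequalities to a neighborhood, and glue with a locally Lipschitz partition of unity obtained from paracompactness of the metric space $\tilde Y$. The paper gives no proof of its own for this lemma, so there is nothing in the text to diverge from; your argument is complete and all the small checks (positivity of the denominator, local finiteness giving local Lipschitzness, convexity preserving both pseudo-gradient inequalities) are handled correctly.
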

\begin{lemma}
Assume $E\in C^1(X,\mathbf{R})$ is $\mathcal{Q}(s)$ invariant, then there exists an equivariant pseudo-gradient vector filed for
$E$ on $\tilde{X}=\{z\in X: E'(z)\neq0\}$. That is, $v(\mathcal{Q}(s)z)=\mathcal{Q}(s)v(z)$ for every $z\in\tilde{X}$ and $s\in\mathbf{R}$.
\end{lemma}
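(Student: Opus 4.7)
The plan is to take a (generally non-equivariant) pseudo-gradient from Lemma~\ref{lem3.1} and symmetrize it by averaging over the one-parameter group $\{\mathcal{Q}(s)\}_{s\in\mathbf{R}}$. Because this group is non-compact, the correct notion of ``average'' is the Bohr mean, exactly as in \eqref{3.24}, rather than a naive finite average.

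First I would record two structural facts. Each $\mathcal{Q}(s)$ is a linear isometry of $X$ (time translation preserves both the Hilbert norm and the defining condition $x(t+T)=Qx(t)$), and the $\mathcal{Q}(s)$-invariance of $E$ then yields, after identifying $X^{*}$ with $X$ via Riesz, the equivariance $E'(\mathcal{Q}(s)z)=\mathcal{Q}(s)E'(z)$. In particular $\tilde{X}$ is $\mathcal{Q}(s)$-invariant. Let $v_{0}:\tilde{X}\to X$ be the locally Lipschitz pseudo-gradient given by Lemma~\ref{lem3.1}, and set
$$v(z)=\lim_{t\to\infty}\frac{1}{t}\int_{0}^{t}\mathcal{Q}(-s)v_{0}(\mathcal{Q}(s)z)\,\mathrm{d}s,\qquad z\in\tilde{X}.$$
The map $s\mapsto\mathcal{Q}(s)z$ is almost periodic in $X$ (this is the same structural fact already exploited in Lemma~\ref{p1}(iv) and \eqref{3.24}, resting on the decomposition \eqref{1.2}--\eqref{1.3}); composing with the continuous $v_{0}$ and the isometry $\mathcal{Q}(-s)$ keeps the integrand almost periodic, so the Bohr mean exists.

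The pseudo-gradient inequalities for $v$ follow by averaging the pointwise bounds
$$\|\mathcal{Q}(-s)v_{0}(\mathcal{Q}(s)z)\|=\|v_{0}(\mathcal{Q}(s)z)\|\leq 2\|E'(\mathcal{Q}(s)z)\|=2\|E'(z)\|,$$
$$\langle\mathcal{Q}(-s)v_{0}(\mathcal{Q}(s)z),E'(z)\rangle=\langle v_{0}(\mathcal{Q}(s)z),E'(\mathcal{Q}(s)z)\rangle\geq\|E'(\mathcal{Q}(s)z)\|^{2}=\|E'(z)\|^{2},$$
through the mean. Equivariance $v(\mathcal{Q}(r)z)=\mathcal{Q}(r)v(z)$ falls out of the substitution $s\mapsto s-r$ combined with the translation invariance of the Bohr mean.

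The step I expect to be the main obstacle is \emph{local} Lipschitz continuity of $v$, since Bohr averaging over a non-compact group need not a priori preserve it. Here I would exploit almost periodicity once more: for fixed $z_{0}\in\tilde{X}$, the orbit closure $\overline{\{\mathcal{Q}(s)z_{0}:s\in\mathbf{R}\}}$ is a compact subset of $\tilde{X}$, so $v_{0}$ admits a uniform Lipschitz constant $K$ on some $X$-neighborhood $U$ of this set. For $z$ close enough to $z_{0}$ the entire orbit $\{\mathcal{Q}(s)z\}$ also lies in $U$ (because $\mathcal{Q}(s)$ is an isometry), whence the pointwise bound $\|\mathcal{Q}(-s)v_{0}(\mathcal{Q}(s)z)-\mathcal{Q}(-s)v_{0}(\mathcal{Q}(s)z_{0})\|\leq K\|z-z_{0}\|$ survives the mean and gives $\|v(z)-v(z_{0})\|\leq K\|z-z_{0}\|$. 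This compactness-of-orbit argument is precisely where the quasi-periodic structure of $X$ is indispensable and is what makes the Bohr-mean construction a bona fide substitute for Haar averaging over a compact group.
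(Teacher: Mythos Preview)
Your proposal is correct and follows essentially the same approach as the paper: both start from the non-equivariant pseudo-gradient supplied by Lemma~\ref{lem3.1}, average it via the Bohr mean $v(z)=\lim_{r\to\infty}\frac{1}{r}\int_{0}^{r}\mathcal{Q}(-s)w(\mathcal{Q}(s)z)\,\mathrm{d}s$, verify the two pseudo-gradient inequalities by averaging the pointwise bounds (using $E'(\mathcal{Q}(s)z)=\mathcal{Q}(s)E'(z)$), obtain equivariance by a translation of the integration variable, and establish local Lipschitz continuity from the compactness of the orbit closure together with a uniform Lipschitz bound for $w$ on an invariant tube around it. The only differences are cosmetic (notation and the order in which the properties are checked).
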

\begin{proof}
By Lemme \ref{lem3.1}, there exists a pseudo-gradient vector field $w:\tilde{X}\rightarrow X$.
Define $v:\tilde{X}\rightarrow X$ by
$$v(z)=\lim\limits_{r\rightarrow\infty}\frac{1}{r}\int_0^r\mathcal{Q}(-s)w(\mathcal{Q}(s)z)\mathrm{d}s.$$
Obviously, $\mathcal{Q}(-s)w(\mathcal{Q}(s)z)$ is almost periodic in $s$, and $v$ is well defined.
Hence for $\tau\in\mathbf{R}$,
\begin{align*}
v(\mathcal{Q}(\tau)z)&=\lim\limits_{r\rightarrow\infty}\frac{1}{r}\int_{0}^r\mathcal{Q}(-s)w(\mathcal{Q}(s+\tau)z)\mathrm{d}s\\
&=\mathcal{Q}(\tau)\lim\limits_{r\rightarrow\infty}\frac{1}{r}\int_{0}^r\mathcal{Q}(-s-\tau)w(\mathcal{Q}(s+\tau)z)\mathrm{d}s\\
&=\mathcal{Q}(\tau)\lim\limits_{r\rightarrow\infty}\frac{1}{r}\int_{0}^r\mathcal{Q}(-t)w(\mathcal{Q}(t)z)\mathrm{d}t\\
&=\mathcal{Q}(\tau)v(z).
\end{align*}
By a simple calculation, we obtain
\begin{align*}
\|v(z)\|\leq\sup\limits_{s\in\mathbf{R}}\|w(\mathcal{Q}(s)z)\|\leq2\sup\limits_{s\in\mathbf{R}}\|E'(\mathcal{Q}(s)z)\|=2\|E'(z)\|,
\end{align*}
\begin{align*}
\langle v(z),E'(z)\rangle&=\lim\limits_{r\rightarrow\infty}\frac{1}{r}\int_{0}^r\left\langle \mathcal{Q}(-s)w(\mathcal{Q}(s)z),E'(z)\right\rangle\mathrm{d}s\\
&=\lim\limits_{r\rightarrow\infty}\frac{1}{r}\int_{0}^r\left\langle w(\mathcal{Q}(s)z),E'( \mathcal{Q}(s)z)\right\rangle\mathrm{d}s\\
&\geq\lim\limits_{r\rightarrow\infty}\frac{1}{r}\int_{0}^r\|E'(\mathcal{Q}(s)z)\|^2\mathrm{d}s\\
&=\|E'(z)\|^2.
\end{align*}
It suffices to show that $v$ is locally Lipschitz continuous.
For $z\in X$, denote $\Gamma=\{\mathcal{Q}(s)z:s\in\mathbf{R}\}$, then the closure $\overline{\Gamma}$ is the hull of $z$ and so is compact. Hence there exists a $\delta>0$ such that $w$ is Lipschitz continuous on
$$\Gamma_\delta=\{z\in X:\dist(z,\overline{\Gamma})<\delta\}.$$
Clearly, $\Gamma_\delta$ is $\mathcal{Q}(s)$ invariant, and for each $z_1,z_2\in\Gamma_\delta$ we have
\begin{align*}
\|v(z_1)-v(z_2)\|&\leq\lim\limits_{r\rightarrow\infty}\frac{1}{r}\int_{0}^r\|\mathcal{Q}(-s)(w(\mathcal{Q}(s)z_1)-w(\mathcal{Q}(s)z_2))\|\mathrm{d}s\\
&=\lim\limits_{r\rightarrow\infty}\frac{1}{r}\int_{0}^r\|(w(\mathcal{Q}(s)z_1)-w(\mathcal{Q}(s)z_2))\|\mathrm{d}s\\
&\leq L\lim\limits_{r\rightarrow\infty}\frac{1}{r}\int_{0}^r\|\mathcal{Q}(s)(z_1-z_2)\|\mathrm{d}s\\
&=L\|z_1-z_2\|,
\end{align*}
where $L$ is the Lipschitz constant of $w$ on $\Gamma_\delta$.
\end{proof}

\begin{lemma}\label{lem3}
Assume $E\in C^1(X,\mathbf{R})$ is $\mathcal{Q}(s)$ invariant and satisfies (P.-S.). Let $U$ be an open invariant neighbourhood of $K_c$. Then for each $\epsilon>0$, there exist $\varepsilon\in(0,\epsilon]$ and $\eta\in C([0,1]\times X,X)$
such that

\begin{enumerate}
\item [{\rm (i)}] $\eta(1,E^{c+\varepsilon}\setminus U)\subset E^{c-\varepsilon}$;
\item [{\rm (ii)}] if $z\notin E^{-1}([c-\epsilon,c+\epsilon])$, $\eta(t,z)=z$ for each $t\in[0,1]$;
\item [{\rm (iii)}] $\eta(t,\mathcal{Q}(s)z)=\mathcal{Q}(s)\eta(t,z),\ {\rm for\ each\ }z\in X,\ t\in[0,1],\ s\in\mathbf{R}$;
\item [{\rm (iv)}] if $E(0)=0$ and $c\neq 0$, $\eta(t,\cdot)\in\mathfrak{B}$ for each $t\in[0,1]$.
\end{enumerate}
\end{lemma}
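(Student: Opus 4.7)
The plan is to build $\eta$ as the time-$t$ flow of a $\mathcal{Q}(s)$-equivariant, locally Lipschitz, bounded vector field constructed from the pseudo-gradient $v$ of the preceding lemma, following the classical quantitative deformation template. Using (P.-S.), $K_c$ is compact, so I select $\varepsilon_0\in(0,\epsilon]$ and nested invariant neighborhoods $N\subset U$ of $K_c$ with a uniform lower bound $\|E'(z)\|\ge\alpha>0$ for $z\in E^{-1}([c-\varepsilon_0,c+\varepsilon_0])\setminus N$. Take a Lipschitz cut-off $\chi$ depending only on $\dist(\cdot,K_c)$ (which is $\mathcal{Q}(s)$-invariant since $K_c$ is invariant and $\mathcal{Q}(s)$ is an isometry) with $\chi=0$ on $N$ and $\chi=1$ off $U$, and a scalar cut-off $\psi$ supported in $[c-\varepsilon_0,c+\varepsilon_0]$, equal to $1$ on $[c-\varepsilon,c+\varepsilon]$ for some small $\varepsilon\in(0,\varepsilon_0/2]$. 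Setting
\[
W(z)=-\chi(z)\,\psi(E(z))\,\frac{v(z)}{\max\{1,\|v(z)\|^2\}},
\]
one gets a bounded, equivariant, locally Lipschitz field with global flow $\eta(t,z)$.

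Properties (ii) and (iii) are immediate from the construction: $W\equiv 0$ off $E^{-1}([c-\varepsilon_0,c+\varepsilon_0])$, and equivariance of $v$, $\chi$, and $\psi\circ E$ transfers to $\eta$. For (i), the chain rule together with Definition \ref{def3.3} gives $\tfrac{d}{ds}E(\eta(s,z))\le -\tfrac{1}{2}\chi(\eta)\psi(E(\eta))$ on the region $\{\|v\|\ge 1\}$ and an analogous estimate otherwise, and combined with $\|E'\|\ge\alpha$ on $E^{-1}([c-\varepsilon_0,c+\varepsilon_0])\setminus U$ this yields $\eta(1,E^{c+\varepsilon}\setminus U)\subset E^{c-\varepsilon}$ once $\varepsilon$ is small enough that $4\varepsilon$ is dominated by the $E$-decrement attainable outside $U$ in unit time.

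The main obstacle is (iv), i.e.\ verifying $\eta(t,\cdot)\in\mathfrak{B}$. Condition (a) is (iii), together with the observation that $E(0)=0$ and $c\neq 0$ force $0\notin\mathrm{supp}(\psi\circ E)$ as soon as $\varepsilon_0<|c|$, so $W(0)=0$ and $\eta(t,0)=0$. The finite-dimensional approximation clause (b) is where the real work lies. The key structural input is the canonical $\mathcal{Q}(s)$-Fourier decomposition of $X$: writing $y(t)=P^{-1}x(t)$, each component expands as $y_j(t)=\sum_{m\in\mathbf{Z}}c_{m,j}\exp\!\bigl(i(2\pi m+\theta_j)t/T\bigr)$, and every finite-dimensional $\mathcal{Q}(s)$-invariant subspace of $X$ is a direct sum of finitely many such one-dimensional modes. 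In particular the Galerkin projection $\pi_N$ onto modes with $|m|\le N$ commutes with $\mathcal{Q}(s)$. Given compact $\Gamma\subset Y$ and tolerance $\delta>0$, I take $Z=Y+\pi_N X$ for $N$ so large that $\pi_N$ approximates the identity uniformly on a precompact invariant neighborhood of $\eta([0,1]\times\Gamma)$, define $\tilde W=\pi_N W\circ\pi_N$ restricted to $Z$, and let $\tilde\eta$ be its flow. Then $\tilde\eta(t,\cdot)$ preserves the finite-dimensional invariant $Z$, is equivariant, and Gronwall's inequality applied with the local Lipschitz bounds of $v,\chi,\psi$ delivers $\|\eta(t,z)-\tilde\eta(t,z)\|\le\delta$ uniformly on $\Gamma$. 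The delicate point is to simultaneously retain equivariance, preservation of $Z$, and the Gronwall bound, which is precisely why the commuting-projection structure provided by the $\mathcal{Q}(s)$-Fourier modes is essential.
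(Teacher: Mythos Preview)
Your approach is essentially the same as the paper's: construct an equivariant pseudo-gradient deformation via cut-offs and a bounded locally Lipschitz vector field, then handle (iv) by Galerkin approximation with the $\mathcal{Q}(s)$-equivariant Fourier projections $P_k$ (the paper writes $g_k=P_k\circ g\circ P_k$ and invokes continuous dependence of ODE solutions, exactly your $\tilde W=\pi_N W\circ\pi_N$ plus Gronwall).

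The one place your sketch differs materially is the bookkeeping for (i). The paper normalises to $g=-\psi\,v/\|v\|$ and runs the flow only for time $\sqrt{\varepsilon}$; since $\|g\|\le 1$, a trajectory starting in $U^{C}$ stays in $(U^{C})_{\sqrt{\varepsilon}}$, where the single cut-off $\psi$ is identically $1$ and the quantitative bound $\|E'\|\ge 4\sqrt{\varepsilon}$ then forces the drop by $2\varepsilon$. In your version (unit time, $W=-\chi\,\psi(E)\,v/\max\{1,\|v\|^{2}\}$), a trajectory from $E^{c+\varepsilon}\setminus U$ may enter $U\setminus N$ where $\chi<1$, so the inequality $\tfrac{d}{ds}E(\eta)\le -\tfrac12\chi\psi$ alone does not give the claim; you also need to couple it with the travel bound $\|\dot\eta\|\le\chi$ (hence $-\tfrac{d}{ds}E\ge \tfrac{\alpha}{2}\|\dot\eta\|$ via $\langle E',v\rangle\ge\|E'\|^{2}$ and $\|v\|\le 2\|E'\|$) and with $\dist(K_c,U^{c})>0$ to force either enough time with $\chi=1$ or enough arc length, in either case a decrement $\ge 2\varepsilon$ for $\varepsilon$ small. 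This is routine, but your sentence ``the $E$-decrement attainable outside $U$ in unit time'' skips the step of showing the flow stays where the decrement estimate applies; make that explicit.
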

\begin{proof}
First we claim that for each given $\epsilon>0$, there exists a $\varepsilon\in(0,\frac{\epsilon}{2}]$ such
that if $z\in E^{-1}([c-2\varepsilon,c+2\varepsilon])\cap (U^C)_{2\sqrt{\varepsilon}}$, then
\begin{align}\label{3.4}
\|E'(z)\|\geq4\sqrt{\varepsilon},
\end{align}
where $U^C$ denotes the complement of $U$, and $(U^C)_{2\sqrt{\varepsilon}}$ is the $2\sqrt{\varepsilon}$ neighbourhood of
$U^C$. If such a $\varepsilon$ does not exist, there is a sequence $\{z_k\}$ such that
$$z_k\in(U^C)_{\frac{2}{\sqrt{k}}},\ c-\frac{2}{k}\leq E(z_k)\leq c+\frac{2}{k},$$
and
$$\|E'(z_k)\|<\frac{4}{\sqrt{k}}.$$
Since $E$ satisfies (P.-S.), it has a convergent subsequence, without loss of generality, still denoted by $\{z_k\}$.
Assume $\lim\limits_{k\rightarrow\infty}z_k=z$, then
$$E(z)=c,\ \ E'(z)=0,$$
and $z\in K_c\cap U^C=\emptyset$, a contradiction.

Let
$$A=\{z\in X:z\in E^{-1}([c-2\varepsilon,c+2\varepsilon])\cap (U^C)_{2\sqrt{\varepsilon}}\},$$
$$B=\{z\in X:z\in E^{-1}([c-\varepsilon,c+\varepsilon])\cap (U^C)_{\sqrt{\varepsilon}}\}\subset A,$$
$$\psi(z)=\frac{\dist(z,A^C)}{\dist(z,A^C)+\dist(z,B)}.$$
Then
$0\leq \psi(z)\leq1,$
and $\psi(z)=1$ if $z\in B$, $\psi(z)=0$ if $z\in A^C$.
Define a continuous function $g$ on $X$ by
\begin{equation}
g(z)=\bigg\{
\begin{array}{ccc}
-\psi(z)\frac{v(z)}{\|v(z)\|},\ {\rm if}\ z\in A,\\
0,\ \ \ \ \ \ \ \ \ \ \ \ \ \ \  {\rm if}\ z\notin A,
\end{array}
\end{equation}
where $v(z)$ is an equivariant pseudo-gradient vector filed for $E$. Since $g$ is locally Lipschitz continuous and bounded, the following Cauchy problem has a unique solution $x(\cdot,z)$ defined on $[0,\infty)$:
\begin{equation}\label{3.6}
\bigg\{
\begin{array}{ccc}
x'=g(x),\\
x(0)=z.
\end{array}
\end{equation}
Let
\begin{align*}
\eta(t,z)=x(\sqrt\varepsilon t,z),\ \ t\in[0,1].
\end{align*}
Then $\eta(\cdot,\cdot)$ is continuous and $\eta(t,\cdot):X\rightarrow X$ is a homeomorphism for each $t\in[0,1]$.

Now we prove that $\eta$ satisfies (i)-(iv).

(i) Since
\begin{align*}
\|x(t,z)-z\|=\|\int_0^tg(x(s,z))\mathrm{d}s\|\leq\int_0^t\|g(x(s,z))\|\mathrm{d}s\leq t,
\end{align*}
we have
$$x(t,U^C)\subset(U^C)_{\sqrt\varepsilon}\ \ \ \forall t\in[0,\sqrt\varepsilon].$$
By the definition of $g$, for $t\in[0,\sqrt\varepsilon]$ and $x(t,z)\in A$, we obtain
\begin{align*}
\frac{\mathrm{d}}{\mathrm{d}t}E(x(t,z))&=\langle E'(x(t,z)),g(x(t,z))\rangle\\
&=\left\langle E'(x(t,z)),-\psi(x(t,z))\frac{v(x(t,z))}{\|v(x(t,z))\|}\right\rangle\\
&=\frac{-\psi(x(t,z))}{\|v(x(t,z))\|}\left\langle E'(x(t,z)),v(x(t,z))\right\rangle\\
&\leq\frac{-\psi(x(t,z))}{\|v(x(t,z))\|}\|E'(x(t,z)))\|^2\\
&\leq0.
\end{align*}
For $z\in E^{c+\varepsilon}\setminus U$, if $E(x(\tau,z))<c-\varepsilon$ for some $\tau\in[0,\sqrt\varepsilon]$,
then $E(x(\sqrt\varepsilon,z))<c-\varepsilon$, and $\eta(1,z)\in E^{c-\varepsilon}$. If such $\tau\in[0,\sqrt\varepsilon]$ doesn't exist, then
$$x(t,z)\in B\ \ \ \forall t\in[0,\sqrt\varepsilon].$$
From \eqref{3.4}, we have
\begin{align*}
E(x(\sqrt{\varepsilon},z))&=E(z)+\int_0^{\sqrt\varepsilon}\frac{\mathrm{d}}{\mathrm{d}t}E(x(t,z))\mathrm{d}t\\
&=E(z)+\int_0^{\sqrt\varepsilon}\langle E'(x(t,z)),g(x(t,z))\rangle\mathrm{d}t\\
&=E(z)+\int_0^{\sqrt\varepsilon}\left\langle E'(x(t,z)),-\frac{v(x(t,z))}{\|v(x(t,z))\|}\right\rangle\mathrm{d}t\\
&\leq c+\varepsilon-\int_0^{\sqrt\varepsilon}\frac{\|E'(x(t,z))\|^2}{\|v(x(t,z))\|}\mathrm{d}t\\
&\leq c-\varepsilon.
\end{align*}
Then $\eta(1,z)\in E^{c-\varepsilon}$, and $\eta(1,E^{c+\varepsilon}\setminus U)\subset E^{c-\varepsilon}$.

(ii) If $z\notin E^{-1}([c-\epsilon,c+\epsilon])$, we have $g(z)=0$, and hence
$\eta(t,z)=z$ for each $t\in[0,1]$.

(iii) It is easy to see that $\psi(\mathcal{Q}(s)z)=\psi(z)$, which yields that $g(\mathcal{Q}(s)z)=\mathcal{Q}(s)g(z)$.
Then
\begin{align*}
\eta(t,\mathcal{Q}(s)z)=x(\sqrt\varepsilon t,\mathcal{Q}(s)z)&=\mathcal{Q}(s)z+\int_0^{\sqrt\varepsilon t}g(x(\tau,\mathcal{Q}(s)z))\mathrm{d}\tau\\
&=\mathcal{Q}(s)z+\mathcal{Q}(s)\int_0^{\sqrt\varepsilon t}g(\mathcal{Q}(-s)x(\tau,\mathcal{Q}(s)z))\mathrm{d}\tau.\\
\end{align*}
Thus
\begin{align*}
\mathcal{Q}(-s)\eta(t,\mathcal{Q}(s)z)=\mathcal{Q}(-s)x(\sqrt\varepsilon t,\mathcal{Q}(s)z)=z+\int_0^{\sqrt\varepsilon t}g(\mathcal{Q}(-s)x(\tau,\mathcal{Q}(s)z))\mathrm{d}\tau.
\end{align*}
Since the solution of \eqref{3.6} is unique, we get
$$\mathcal{Q}(-s)x(t,\mathcal{Q}(s)z)=x(t,z),$$
and thus
$$\eta(t,\mathcal{Q}(s)z)=\mathcal{Q}(s)\eta(t,z),\ \ {\rm for\ each}\ t\in[0,1],\ s\in\mathbf{R}.$$

(iv) Taking $\epsilon<|c|$, we have $0\notin E^{-1}(c-\epsilon,c+\epsilon)$. It follows from (ii) that
$\eta(t,0)=0$ for $t\in[0,1]$. Now we only need to prove that $\eta(t,\cdot)$ satisfies (b) of $\mathfrak{B}$.
Let $X_k\subset X$, $k\in\mathbf{N}$ be a sequence of finite dimensional $\mathcal{Q}(s)$ invariant subspaces such that
$X_0=Y$, $X_k\subset X_{k+1}$ and $\cup_{k=0}^\infty X_k=X$. Let $P_k$ be the orthogonal projection on $X_k$ and
$g_k=P_k\circ g\circ P_k$. Then it is easy to see that $g_k$ is locally Lipschitz continuous and $g_k(x)\rightarrow g(x)$ for $k\rightarrow\infty$ and each $x\in X$.
Let $x^k(t,z)$ be the solution of the following equation
\begin{equation}
\bigg\{
\begin{array}{ccc}
(x^k)'=g_k(x^k),\\
x^k(0)=z.
\end{array}
\end{equation}
It is easy to see that $x(t,\cdot)\rightarrow x^k(t,\cdot)$ uniformly on compact sets for a fixed $t\in[0,\sqrt{\varepsilon}]$.
For a given compact set $\Gamma$ and constant $\tilde{\varepsilon}>0$, take $k$ big enough so that
$$\|x(\sqrt{\varepsilon}t,z)-x^k(\sqrt{\varepsilon}t,z)\|\leq\tilde{\varepsilon}\ {\rm \ for}\ t\in[0,1],\ z\in\Gamma.$$
Then the proof is completed by choosing $Z=X_k$ and $\tilde{\eta}(t,\cdot)=x^k(\sqrt{\varepsilon}t,\cdot)$.
\end{proof}

We also need the following.
\begin{lemma}\label{lem2.3}
Assume $A\in\mathcal{A}_k$, $B$ is $\mathcal{Q}(s)$ invariant and $\ind B=r\leq k$. Then $A\setminus B\in \mathcal{A}_{k-r}$.
\end{lemma}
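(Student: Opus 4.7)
The plan is to use subadditivity (Lemma~\ref{p1}(v)) applied to the covering $h(A)\subset h(A\setminus B)\cup h(B)$, after intersecting with $\hat X$, and to control the second summand using the monotonicity and mapping properties of the $\mathcal{Q}(s)$ index.

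First I would check the two invariance statements that make the claim well posed. The set $A\setminus B$ is $\mathcal{Q}(s)$ invariant: if $x\in A\setminus B$, then $\mathcal{Q}(s)x\in A$ by invariance of $A$, and $\mathcal{Q}(s)x\notin B$ because otherwise $x=\mathcal{Q}(-s)\mathcal{Q}(s)x\in B$ by invariance of $B$. I would also note that $\hat X$ is $\mathcal{Q}(s)$ invariant (a change of variables in the time average gives the same zero limit), so for any $h\in\mathfrak{B}$ the set $h(A\setminus B)\cap\hat X$ is indeed invariant and lies in $\Gamma$, and the definition of $\mathcal{A}_{k-r}$ makes sense.

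Now I would fix an arbitrary $h\in\mathfrak{B}$ and observe the set-theoretic inclusion
\begin{equation*}
h(A)\cap\hat X \;\subset\; \bigl(h(A\setminus B)\cap\hat X\bigr)\,\cup\,\bigl(h(B)\cap\hat X\bigr).
\end{equation*}
By the monotonicity and subadditivity properties established in Lemma~\ref{p1},
\begin{equation*}
k \;\leq\; \ind\bigl(h(A)\cap\hat X\bigr)\;\leq\; \ind\bigl(h(A\setminus B)\cap\hat X\bigr)+\ind\bigl(h(B)\cap\hat X\bigr),
\end{equation*}
where the first inequality uses $A\in\mathcal{A}_k$. It remains to estimate the last term by $r$.

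For this I would use that $h\in\mathfrak{B}$ is an equivariant homeomorphism, hence $h^{-1}$ is continuous and equivariant as well. The mapping property applied to both $h:B\to h(B)$ and $h^{-1}:h(B)\to B$ yields $\ind h(B)=\ind B=r$, and then monotonicity gives $\ind(h(B)\cap\hat X)\leq r$. Plugging this into the chain above produces $\ind(h(A\setminus B)\cap\hat X)\geq k-r$ for every $h\in\mathfrak{B}$, i.e.\ $A\setminus B\in\mathcal{A}_{k-r}$. The only subtlety worth double checking is that $h(B)$ avoids $\fix\{\mathcal{Q}(s)\}$ so that the mapping property can be applied without issue; since $\ind B=r$ is assumed finite, $B\cap\fix\{\mathcal{Q}(s)\}=\emptyset$ by the continuity property, and equivariance of $h$ together with $h(0)=0$ transfers this to $h(B)$, so no real obstacle arises.
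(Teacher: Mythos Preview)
Your argument is correct and is essentially the paper's proof reorganized: the paper first pulls back through $h^{-1}$, uses the set identity $(A\setminus B)\cap h^{-1}(\hat X)=(A\cap h^{-1}(\hat X))\setminus B$, and then applies subadditivity with $B$, while you apply subadditivity directly on the image side to $h(A\setminus B)\cap\hat X$ and $h(B)\cap\hat X$ and bound the second term via the mapping property---the two routes use exactly the same ingredients. One small correction to your closing remark: it is not the continuity property (which gives the opposite implication) but Definition~\ref{def2} itself that yields $B\cap\fix\{\mathcal{Q}(s)\}=\emptyset$ from $\ind B<\infty$, since any $\Phi$ satisfying \eqref{2.4} must vanish at a fixed point; and in any case all the mapping property actually requires is that $h(B)\subset X\setminus\{0\}$, which is immediate from $0\notin B$ together with $h(0)=0$ and the injectivity of $h$.
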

\begin{proof}
Since $(A\setminus B)\cap h^{-1}(\hat{X})=(A\cap h^{-1}(\hat{X}))\setminus B$ for every $h\in\mathfrak{B}$, by Lemma \ref{p1} we have
\begin{align*}
\ind(h(A\setminus B)\cap\hat{X})&=\ind((A\setminus B)\cap h^{-1}(\hat{X}))\\
&=\ind((A\cap h^{-1}(\hat{X}))\setminus B)\\
&\geq \ind(A\cap h^{-1}(\hat{X}))-\ind B\\
&=\ind(h(A)\cap\hat{X})-\ind B\\
&=k-r.
\end{align*}
\end{proof}

Now let us give the proof of Theorem \ref{th3.1}.
\begin{proof}[Proof of Theorem \ref{th3.1}]

Assume $c_i=c_j=c$ for some $1\leq i\leq j$. By Lemma \ref{p2}, it suffices to prove
$$\ind K_c\geq j-i+1.$$
Assume $\ind K_c=r<j-i+1$.
Since $E$ satisfies (P.-S.) and is $\mathcal{Q}(s)$ invariant, $K_c$ is compact and $\mathcal{Q}(s)$ invariant. By the continuity of index, there exists a $\mathcal{Q}(s)$ invariant neighbourhood $U$ of $K_c$
such that
$$\ind U=\ind K_c.$$
Take $A\in\mathcal{A}_j$ such that
$$\sup\limits_{A}E\leq c+\varepsilon,$$
and denote $\Omega=A\setminus U$.
Then by Lemma \ref{lem2.3}, one has $\Omega\in\mathcal{A}_{j-r}$.
Now we apply Lemma \ref{lem3} with $\Lambda=\eta(1,\Omega)\subset E^{c-\varepsilon}$. Then $\Lambda$ is $\mathcal{Q}(s)$ invariant, and
\begin{align}\label{2.16}
\sup\limits_{\Lambda}E\leq c-\varepsilon.
\end{align}
Since $\eta(1,\cdot)\in\mathfrak{B}$, one has $\Lambda\in\mathcal{A}_{j-r}$. Thus
by the definition of $c$, one deduces
$$\sup\limits_{\Lambda}E(x)\geq c_{j-r}=c,$$
which contradicts \eqref{2.16}, proving the theorem.
\end{proof}

We need the following.
\begin{lemma}\label{lem2.6}
Assume $V$ is a $\nu$ dimensional invariant subspace of $X$ such that $V\cap\fix\{\mathcal{Q}(s)\}=\{0\}$, and $D\subset V$ is a bounded neighborhood of $0$ in $V$. Then
$$\ind( \partial D)=\frac{\nu}{2},$$
where $\partial D$ is the boundary of $D$ relative to $V$.
\end{lemma}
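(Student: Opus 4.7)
The plan is to prove $\ind \partial D \le \nu/2$ by writing down an explicit equivariant map and $\ind \partial D \ge \nu/2$ by an equivariant Borsuk--Ulam-type obstruction, after first decomposing $V$ into its character components.

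\emph{Step 1 (Decomposition of $V$).} Using the unitary change of coordinates $y = P^{-1}x$ from \eqref{02.5}, the quasi-periodicity $x(t+T) = Qx(t)$ becomes $y_j(t+T) = e^{i\theta_j}y_j(t)$, so each $y_j$ expands in Fourier modes $e^{i(2\pi m+\theta_j)t/T}$. Because $V$ is finite-dimensional and $V \cap \fix\{\mathcal{Q}(s)\} = \{0\}$ excludes the modes with $2\pi m + \theta_j = 0$, an argument identical to the remark after Theorem \ref{theorem1} pairs each complex mode with its conjugate partner and decomposes $V$ as a real orthogonal direct sum $V = \bigoplus_{l=1}^{q} V_l$, where each $V_l$ is a $2$-dimensional real subspace on which $\mathcal{Q}(s)$ acts via multiplication by $e^{i(2\pi n_l + \theta_{p_l})s/T}$ with $2\pi n_l + \theta_{p_l} \neq 0$; in particular $\nu = 2q$.

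\emph{Step 2 (Upper bound).} I would define $\Phi : V \to \mathbf{C}^{q}$ by letting $\Phi_l$ be the complex coordinate on $V_l$ under the identification above. Then $\Phi$ is a linear real isomorphism and satisfies \eqref{2.4} with $k_l = 1$ for every $l$, so $\Phi(\partial D) \subset \mathbf{C}^{q} \setminus \{0\}$. This witnesses $\ind \partial D \le q = \nu/2$.

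\emph{Step 3 (Lower bound).} Suppose, for contradiction, $\ind \partial D = k < q$, with witness $\Phi = (\Phi_1^\top,\ldots,\Phi_m^\top)^\top : \partial D \to \mathbf{C}^k \setminus \{0\}$ as in Definition \ref{def2}. Since $\partial D$ is closed in $V$, extend each $\Phi_j$ continuously by Tietze's theorem and then equivariantize via the averaging procedure \eqref{3.24} (orbits in $V$ lie in the finite-dimensional torus generated by the characters involved, so almost-periodicity in $s$ is automatic), obtaining an equivariant continuous $\Psi : V \to \mathbf{C}^k$ agreeing with $\Phi$ on $\partial D$. Equivariance together with the nontriviality of every character on $\mathbf{C}^k$ forces $\Psi(0) = 0$. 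Combining $\Psi$ with the identification $\iota : V \to \mathbf{C}^{q}$ from Step 2, and normalizing radially, one produces a continuous map $S(V) \to S(\mathbf{C}^k)$ that is equivariant for the compact torus generated (in the closure topology) by the characters appearing on both sides. Since both $V$ and $\mathbf{C}^k$ are unitary representations of this torus with no trivial summands and $\dim_{\mathbf{R}} V > \dim_{\mathbf{R}} \mathbf{C}^k$, the equivariant Borsuk--Ulam theorem (equivalently, a Fadell--Rabinowitz cohomological index calculation) rules this out, giving the desired contradiction.

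\emph{Main obstacle.} The genuine difficulty is Step 3: the upper bound is essentially a bookkeeping exercise, whereas the lower bound requires a topological obstruction under the group $\{\mathcal{Q}(s)\}_{s \in \mathbf{R}}$, which is not compact and whose character set $\{2\pi n_l + \theta_{p_l}\}$ need not be rationally commensurable. The remedy is to pass to the compact torus closure of the character group and invoke its equivariant Borsuk--Ulam theorem; checking the hypothesis that neither $V$ nor $\mathbf{C}^k$ carries a trivial summand reduces precisely to $V \cap \fix\{\mathcal{Q}(s)\} = \{0\}$ and the condition $2\pi n_j + \theta_{p_j} \neq 0$ built into Definition \ref{def2}.
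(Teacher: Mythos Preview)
Your Steps 1 and 2 coincide with the paper: decompose $V$ into two-dimensional rotation planes and read off the upper bound from the resulting complex coordinates.

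Step 3 is where you diverge, and there is a genuine gap. The sentence ``normalizing radially, one produces a continuous map $S(V)\to S(\mathbf{C}^k)$'' does not go through as stated: your extension $\Psi$ is only known to be nonzero on $\partial D$, not on any sphere in $V$, and for a general bounded invariant neighborhood $D$ there is no continuous equivariant identification $\partial D\cong S(V)$ to fall back on (radial projection onto $\partial D$ need not be single-valued or continuous). The Fadell--Rabinowitz route you mention parenthetically does repair this --- one can show the cohomological $T$-index of $\partial D$ equals $\nu/2$ for any such $D$, or equivalently restrict to a generic circle subgroup of your closure torus and invoke the $S^1$ version for $\partial D$ --- but that is a theorem you must cite or prove, not slide past. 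As written, the obstruction step is incomplete.

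For comparison, the paper avoids torus machinery entirely and reduces to the $S^1$ case already available as Lemma~\ref{lem2.7}. It groups the rotation planes of $V$ into blocks $\tilde z_1,\dots,\tilde z_l$ with pairwise irrational period ratios $T_1,\dots,T_l$. On the coordinate slice $\{\tilde z_r=0\text{ for }r\ne j\}\cap\partial D$ the action is a genuine $S^1$-action (period $T_j$), and only those components $\Phi_a$ whose character is an integer multiple of $2\pi/T_j$ can be nonzero there; since $\Phi$ is nowhere zero on $\partial D$, the surviving components over each slice are themselves nowhere zero. After raising components to integer powers so that each block carries a single character, a pigeonhole count (the total target dimension is below $\nu/2$) locates a block whose slice maps into too few complex dimensions, and Lemma~\ref{lem2.7} supplies the zero. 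Your approach is conceptually cleaner but leans on external equivariant topology; the paper's is more intricate but self-contained modulo the classical $S^1$ result.
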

To prove the lemma, we need the following lemma about $S^1$ acting on $\mathbf{R}^{2k}$.
\begin{lemma}[See Theorem 5.5 of \cite{Mawhin}]\label{lem2.7}
Let $\{T(\theta)\}_{\theta\in S^1}$ be an action of $S^1$ over $\mathbf{R}^{2k}$ such that $\fix(S^1)=\{0\}$ and let $D$ be an open
bounded invariant neighbourhood of $0$. If $\Phi\in C(\partial D,\mathbf{C}^{k-1})$  and $n\in \mathbf{N}\setminus\{0\}$ with
$$\Phi(T(\theta)z)=e^{in\theta}\Phi(z),\ \ \theta\in S^1,\ \ z\in\partial D,$$
then $0\in\Phi(\partial D)$.
\end{lemma}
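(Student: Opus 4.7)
The plan is to reduce to a standard unit sphere and then extract a contradiction from an Euler-class computation.

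First, since $\fix(S^1) = \{0\}$, the linear $S^1$-representation on $\mathbf{R}^{2k}$ splits into $k$ nontrivial one-dimensional complex summands. After identifying $\mathbf{R}^{2k} \cong \mathbf{C}^k$, there exist nonzero integers $m_1,\dots,m_k$ such that
\[
T(\theta)(z_1,\dots,z_k) = (e^{im_1\theta}z_1,\dots,e^{im_k\theta}z_k).
\]
Any bounded open invariant neighborhood $D$ of $0$ is $S^1$-equivariantly homotopy equivalent to the unit ball via a radial retraction, so $\partial D$ is equivariantly homotopy equivalent to the unit sphere $S^{2k-1}\subset\mathbf{C}^k$. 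Since the conclusion $0\in\Phi(\partial D)$ depends only on the equivariant homotopy class of $\Phi$, I may assume from the outset that $\partial D = S^{2k-1}$.

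Next I argue by contradiction: suppose $\Phi(z)\neq 0$ for every $z \in S^{2k-1}$. Forming the Borel construction $X := ES^1 \times_{S^1} S^{2k-1}$, I consider the rank-$(k-1)$ complex vector bundle
\[
\xi := ES^1 \times_{S^1} \mathbf{C}^{k-1} \longrightarrow X,
\]
where $S^1$ acts on $\mathbf{C}^{k-1}$ diagonally by weight $n$. The equivariant map $\Phi$ descends to a genuine section $\bar{\Phi}$ of $\xi$, which by assumption is nowhere zero; hence the rational Euler class $e(\xi) \in H^{2k-2}(X;\mathbf{Q})$ vanishes.

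The final step is a characteristic-class calculation. Writing $u\in H^2(X;\mathbf{Q})$ for the pullback of the canonical generator of $H^2(BS^1;\mathbf{Q})$, the bundle $\xi$ is a direct sum of $k-1$ copies of the weight-$n$ line bundle, so $e(\xi) = (nu)^{k-1} = n^{k-1}u^{k-1}$. On the other hand, the Gysin sequence for the rank-$2k$ real sphere bundle $X \to BS^1$, whose Euler class equals $(m_1\cdots m_k)u^k$ (a nonzero element of $H^{2k}(BS^1;\mathbf{Q})$ because each $m_j \neq 0$), identifies $H^*(X;\mathbf{Q}) \cong \mathbf{Q}[u]/(u^k)$; in particular $u^{k-1}\neq 0$ in $H^{2k-2}(X;\mathbf{Q})$. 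Since $n\neq 0$, I get $e(\xi)\neq 0$, contradicting the previous paragraph.

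The main obstacle is the cohomological bookkeeping: the weighted action on $S^{2k-1}$ need not be free, so the honest orbit space is only an orbifold (a weighted projective space). Rational coefficients circumvent this, since $m_1\cdots m_k$ becomes a unit and the Gysin sequence applied to $L^{m_1}\oplus\cdots\oplus L^{m_k} \to BS^1$ produces the clean truncation $\mathbf{Q}[u]/(u^k)$. An alternative, more elementary route uses Brouwer degree on the orbit spaces together with the standard Borsuk--Ulam theorem, but the stabilizer issues make that version less transparent than the Euler-class formulation above.
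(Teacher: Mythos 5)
The paper does not prove this lemma; it is quoted verbatim from Mawhin--Willem \cite{Mawhin}, where the argument is degree-theoretic (Brouwer degree together with $\mathbf{Z}_p$-equivariance for large primes $p$). Your Euler-class argument via the Borel construction is a genuinely different route, and the cohomological computation on the unit sphere --- identifying $H^*(ES^1\times_{S^1}S^{2k-1};\mathbf{Q})\cong\mathbf{Q}[u]/(u^k)$ by a rational Gysin sequence and then showing $e(\xi)=n^{k-1}u^{k-1}\neq0$ --- is correct and clean.

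However, the reduction in your second paragraph is not valid as stated. A bounded open invariant neighbourhood $D$ of $0$ need not be $S^1$-equivariantly contractible, and $\partial D$ need not be equivariantly homotopy equivalent to $S^{2k-1}$: already in $\mathbf{C}$ with the weight-one action, $D=B_1\cup\{1<|z|<2\}$ is bounded, open, invariant, a neighbourhood of $0$, and $\partial D$ is a disjoint union of two circles; in $\mathbf{C}^2$ one can remove an invariant tube from a ball to obtain a connected $D$ whose boundary has the homotopy type of $S^3\sqcup T^2$. The hypothesis ``neighbourhood of $0$'' controls nothing about the topology of $\partial D$, so ``radial retraction'' does not furnish the claimed equivalence. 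What your argument actually requires is only that $p^*(u)^{k-1}\neq 0$ in $H^{2k-2}_{S^1}(\partial D;\mathbf{Q})$, and this can be established directly: pass to the one-point compactification $S^{2k}=\mathbf{R}^{2k}\cup\{\infty\}$, set $U=D$ (thickened) and $V=S^{2k}\setminus\bar D$ (thickened), and apply equivariant Mayer--Vietoris in degree $2k-2$. Since the weight-$(m_1,\dots,m_k,0)$ sphere $S^{2k}$ has $H^{2k-2}_{S^1}(S^{2k};\mathbf{Q})\cong\mathbf{Q}\cdot u^{k-1}$ (its Gysin class vanishes rationally because the bundle has a trivial summand), while $u^{k-1}$ restricts nontrivially to both $U$ (which contains the fixed point $0$) and $V$ (which contains the fixed point $\infty$), the class $(u^{k-1},0)$ cannot lie in the image of the Mayer--Vietoris map, and hence $u^{k-1}$ restricts nontrivially to $U\cap V\simeq\partial D$. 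With that replacement your Euler-class contradiction goes through for arbitrary $D$; as written, though, the homotopy-equivalence step is a genuine gap.
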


Now we are going to prove Lemma \ref{lem2.6}.
\begin{proof}[Proof of Lemma \ref{lem2.6}]
Since $V\subset X$ is $\mathcal{Q}(s)$ invariant and $V\cap\fix\{\mathcal{Q}(s)\}=\{0\}$, it is easy to see that $\nu$ is even.
Let  $(z_1^\top,\cdots,z_{\nu/2}^\top)^\top$ be a set of  coordinates in $V$ such that
$$\mathcal{Q}(s)(z_1^\top,\cdots,z_{\nu/2}^\top)^\top=((Q_1(s)z_1)^\top,\cdots,(Q_{\nu/2}(s)z_{\nu/2})^\top)^\top,$$
where $z_j\in\mathbf{R}^2$ and
\begin{equation}
\begin{split}
Q_j(s)&=\left(
\begin{array}{ccc}
\cos \frac{2\pi m_j+\theta_{p_j}}{T}s&\sin \frac{2\pi m_j+\theta_{p_j}}{T}s\\
-\sin \frac{2\pi m_j+\theta_{p_j}}{T}s&\cos \frac{2\pi m_j+\theta_{p_j}}{T}s
\end{array}
\right),
\end{split}
\end{equation}
for $1\leq j\leq \frac{\nu}{2}$, $1\leq p_j\leq n$ and $m_j\in\mathbf{Z}$ with $\theta_{p_j}+2\pi m_j\neq0$.
We claim that for each set $A\subset V\setminus\{0\}$, one has $\ind A\leq \frac{\nu}{2}$. Clearly, there exists unitary matrix $P_0$
such that
$$P_0^{-1}Q_j(s)P_0=\diag\left\{e^{i\frac{2\pi m_j+\theta_{p_j}}{T}s},e^{-i\frac{2\pi m_j+\theta_{p_j}}{T}s}\right\},\ \ 1\leq j\leq \frac{\nu}{2}.$$
For each $z\in A$, denote $y=(y_{1,1},y_{1,2},\cdots, y_{\nu/2,1},y_{\nu/2,2})^\top=\bar{P}^{-1}z$,
where $\bar{P}=\diag\{P_0,\cdots,P_0\}$. Let
$$\Psi(z)=(y_{1,1},y_{2,1}\cdots,y_{\nu/2,1})^\top.$$
Then $\Psi\in C(A, \mathbf{C}^{\nu/2}\setminus\{0\})$ satisfies \eqref{2.4}. Assume $\ind(\partial D)=q<\frac{\nu}{2}$. Then there exists a mapping
\begin{align}\label{2.17}
\Phi=(\Phi_{1}^\top,\cdots,\Phi_{m}^\top)^\top\in C( \partial D,\mathbf{C}^q\setminus\{0\})
\end{align}
such that $\Phi_j=(\Phi_{j,1},\cdots,\Phi_{j,q_j})^\top\in C( \partial D,\mathbf{C}^{q_j})$ with $q_1+\cdots q_m=q$ and
$$\Phi_j(\mathcal{Q}(s)x)=e^{i\frac{2k_j\pi+\theta_{r_j}}{T}s}\Phi_j(x)$$
for $1\leq j\leq m,\ k_j\in\mathbf{Z},\ 1\leq r_j\leq n$. Rewrite $(z_1,\cdots,z_\frac{\nu}{2})=(\tilde{z}_1,\cdots,\tilde{z}_l)$
such that
$$\mathcal{Q}(s+T_j)\tilde{z}_j=\tilde{z}_j\ \ \forall s\in\mathbf{R},\ \ 1\leq j\leq l,$$
where $T_j>0$ is a constant and
\begin{align}\label{3.16}
k_1T_j+k_2T_r\neq0\ \  \forall j\neq r, \ \ \ k_1,\ k_2\in\mathbf{Z}\setminus\{0\}.
\end{align}
Then for any $x^j=(\tilde{x}^j_1,\cdots,\tilde{x}^j_l)\in\partial D$ with $\tilde{x}^j_a=0$, $1\leq a\leq l$, $a\neq j$, one has
$$\mathcal{Q}(s+T_j)x^j=x^j,\ \ \ 1\leq j\leq l.$$
Since
$$\Phi_a(\mathcal{Q}(T_j)x^j)=\Phi_a(x^j)=e^{i\frac{2k_a\pi+\theta_{r_a}}{T}T_j}\Phi_a(x^j)\ \ {\rm for}\ \ 1\leq a\leq m,$$
one deduces
$$\Phi_a(x^j)=0,\ \ \  {\rm if}\ \frac{(2k_a\pi+\theta_{r_a})T_j}{T}\neq2\pi k\ \ \ \forall k\in\mathbf{Z}.$$
Since $\Phi_a$ not always vanishes,  there exists a $1\leq j_a\leq l$ such that
$$\frac{(2k_a\pi+\theta_{r_a})T_{j_a}}{T}=2\pi n_a$$
for some $n_a\in\mathbf{Z}$. Let
$$\Upsilon=(\Phi^{h_1}_1,\cdots,\Phi^{h_m}_m)^\top=(\Upsilon_1^\top,\cdots,\Upsilon_l^\top)^\top$$
such that
$\Upsilon_j(\mathcal{Q}(s)x)=e^{i\alpha_j s}\Upsilon_j(x),$
$\alpha_j T_j=2p_j\pi$ for some $p_j\in\mathbf{Z}\setminus\{0\}$, $1\leq j\leq l$,
where $\Phi_k^{h_k}=(\Phi_{k,1}^{h_k},\cdots,\Phi_{k,q_k}^{h_k})^\top$, $h_k\in\mathbf{N}_+$ for $1\leq k\leq m$.
Then $\Upsilon_j(x^r)=0$ if $j\neq r$.
It follows from $q<\frac{\nu}{2}$ and \eqref{3.16} that there must exist a $1\leq j\leq m$ such that the dimension of $\Upsilon_j$
is less than the dimension of $x^{j}$. By Lemma \ref{lem2.7}, there exists a $x^{j}_0\in\partial D$ such that $\Upsilon_j(x^j_0)=0$.
Then $\Upsilon(x^j_0)=0$ and  $\Phi(x^{j}_0)=0$
which contradicts \eqref{2.17}, proving the lemma.
\end{proof}

Let $X^+$ denote the vector space spanned by the eigenfunctions corresponding to eigenvalues of $L$ in the set
$$\left\{\lambda_m^j=\mu_j-\bigg(\frac{\theta_j+2\pi m}{T}\bigg)^2>0:\ \theta_j+2\pi m\neq0,\ 1\leq j\leq n,\ m\in\mathbf{Z}\right\}.$$
Denote
$$S_\rho=\{x\in X; \|x\|=\rho\},\ {\rm for\ each}\ \rho>0.$$
\begin{lemma}\label{lem2.9}
$\ind (h(X^+\cap S_\rho)\cap \hat{X})\geq\frac{p_T}{2}$ for every $h\in\mathfrak{B}$ and $\rho>0$.
\end{lemma}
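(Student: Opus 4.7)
The plan is to argue by contradiction. Suppose $\ind(h(X^+\cap S_\rho)\cap\hat X)=k<p_T/2$ and let $\Phi:h(X^+\cap S_\rho)\cap\hat X\to \mathbf{C}^k\setminus\{0\}$ be a continuous equivariant map realizing the index in the sense of Definition \ref{def2}. Applying Tietze's theorem componentwise followed by the averaging procedure from the proof of Lemma \ref{p1}(iv), I would extend $\Phi$ to a continuous equivariant map $\tilde\Phi:X\to\mathbf{C}^k$ whose components keep the nontrivial characters $e^{i(2\pi n_j+\theta_{p_j})s/T}$.

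Next, introduce the natural $\mathcal Q(s)$-invariant projection $P_0:X\to\fix\{\mathcal Q(s)\}$ defined by $P_0x=\lim_{t\to\infty}t^{-1}\int_0^tx(\sigma)\,\mathrm d\sigma$, well defined on the almost-periodic elements in play and taking values in $\ker(I-Q)$, and define the auxiliary continuous equivariant map
\[
\Psi:X^+\cap S_\rho\longrightarrow \mathbf{C}^k\oplus\fix\{\mathcal Q(s)\},\qquad \Psi(x)=\bigl(\tilde\Phi(h(x)),\,P_0 h(x)\bigr),
\]
whose first block carries the nontrivial characters of $\Phi$ and whose second block is invariant. The key observation is that $\Psi$ is nowhere zero on $X^+\cap S_\rho$: if $\Psi(x_0)=0$, then $P_0h(x_0)=0$ forces $h(x_0)\in\hat X$ and thus $h(x_0)\in h(X^+\cap S_\rho)\cap\hat X$, where $\tilde\Phi$ coincides with $\Phi$ and cannot vanish, a contradiction.

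To turn this non-vanishing into a contradiction with $k<p_T/2$, I would pass to a finite-dimensional setting via property (b) of $\mathfrak B$: for the compact set $X^+\cap S_\rho\subset X^+$ and $\varepsilon>0$ small, pick a finite-dimensional invariant $Z\supset X^+$ and an equivariant homeomorphism $\tilde h:Z\to Z$ with $\|h-\tilde h\|\le\varepsilon$ on $X^+\cap S_\rho$. Repeating the construction with $\tilde h$ in place of $h$ gives a continuous equivariant $\tilde\Psi:X^+\cap S_\rho\to\mathbf{C}^k\oplus F_Z$, $F_Z=Z\cap\fix\{\mathcal Q(s)\}$, which for $\varepsilon$ small remains nowhere zero by compactness of $X^+\cap S_\rho$ and uniform continuity.

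The decisive (and technically most delicate) step is then to derive a contradiction from the existence of such a nowhere-zero equivariant $\tilde\Psi$ with $k<p_T/2$ inside the finite-dimensional $Z$. Setting $B=\{x\in X^+\cap S_\rho:\tilde\Phi(\tilde h(x))=0\}$, the map $\tilde\Phi\circ\tilde h$ is a continuous equivariant map from $(X^+\cap S_\rho)\setminus B$ into $\mathbf{C}^k\setminus\{0\}$, so $\ind\bigl((X^+\cap S_\rho)\setminus B\bigr)\le k$; combining this with $\ind(X^+\cap S_\rho)=p_T/2$ (Lemma \ref{lem2.6}) and the subadditivity/continuity of the index (Lemma \ref{p1}) gives $\ind B\ge p_T/2-k>0$. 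On $B$ the invariant component $P_0\tilde h:B\to F_Z\setminus\{0\}$ is continuous and nowhere zero, and the contradiction is obtained by a Borsuk--Ulam-type argument that exploits the mixed-action structure of the target: the invariant factor $F_Z$ cannot absorb the topological obstruction measured by the $\mathcal Q(s)$-index, and applying a slight adaptation of Lemma \ref{lem2.7} to a suitable two-dimensional invariant slice of $X^+$ forces $k\ge p_T/2$. The main obstacle I foresee is precisely this last Borsuk--Ulam step: matching the characters of $\tilde\Phi$ with the 2D rotation blocks of the $\mathcal Q(s)$-action on $X^+$, and carrying out the dimension/character bookkeeping cleanly in the presence of the trivial-action factor $F_Z$.
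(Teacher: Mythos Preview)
Your overall strategy---reduce to finite dimensions via property (b) of $\mathfrak B$ and then seek a Borsuk--Ulam contradiction---is reasonable, and the observation that $\Psi=(\tilde\Phi\circ h,\,P_0 h)$ is nowhere zero on $X^+\cap S_\rho$ is correct. The gap is precisely where you suspect it, and it is genuine.

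The problem is that the second block $P_0\tilde h:B\to F_Z\setminus\{0\}$ carries the \emph{trivial} character, while the $\mathcal Q(s)$-index of Definition~\ref{def2} only obstructs equivariant maps whose components transform by \emph{nontrivial} characters $e^{i(2\pi n_j+\theta_{p_j})s/T}$ with $2\pi n_j+\theta_{p_j}\neq 0$. A set of arbitrarily large $\mathcal Q(s)$-index can admit a nowhere-zero invariant map into $F_Z$: any nonzero constant will do. So ``$\ind B>0$ and $P_0\tilde h|_B\neq 0$'' is not a contradiction, and Lemma~\ref{lem2.7} does not apply because its hypothesis $\fix=\{0\}$ fails on a target with a trivial-action summand. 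Restricting to a two-dimensional invariant slice of $X^+$ does not help either: a circle maps without obstruction into $(\mathbf C^k\oplus F_Z)\setminus\{0\}$ as soon as the target has real dimension at least $2$. In short, the invariant factor $F_Z$ \emph{does} absorb the obstruction, contrary to your hope.

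The paper circumvents this by never attempting to bound an index via an invariant map. Instead it \emph{enlarges} the domain to the full sphere $Z\cap S_\rho$ in a finite-dimensional invariant $Z\supset X^+\oplus\hat X^\bot$. Since $\tilde h(0)=0$, the set $\tilde h(Z\cap S_\rho)$ is the boundary of a bounded neighborhood of $0$ in $Z$, and intersecting with $\hat X\cap Z$ (which has no nonzero fixed points) puts one squarely in the setting of Lemma~\ref{lem2.6}, giving the exact value $\ind(\tilde h(Z\cap S_\rho)\cap\hat X)=\tfrac12\dim(\hat X\cap Z)$. Writing $Z=X^+\oplus\hat X^\bot\oplus Y$, the complement $\tilde\Gamma=Z\setminus N^Z_{\tilde\varepsilon}(X^+)$ is bounded from above by projecting onto $Y$, which satisfies $Y\cap\fix\{\mathcal Q(s)\}=\{0\}$, whence $\ind\tilde\Gamma\le\tfrac12\dim Y$. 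Subadditivity on the decomposition $\tilde h(Z\cap S_\rho)=\tilde h(N^Z_{\tilde\varepsilon}(X^+)\cap S_\rho)\cup\tilde h(\tilde\Gamma\cap S_\rho)$ then yields
\[
\ind\bigl(\tilde h(X^+\cap S_\rho)\cap\hat X\bigr)\ \ge\ \tfrac12\dim(\hat X\cap Z)-\tfrac12\dim Y\ =\ \tfrac12\dim X^+\ =\ \tfrac{p_T}{2}.
\]
The ingredient missing from your plan is this passage to the larger sphere $Z\cap S_\rho$ together with the splitting off of $\hat X^\bot$; that is what converts the problem into two index computations that both live in fixed-point-free spaces where Lemma~\ref{lem2.6} applies.
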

\begin{proof}
By property (iv) of the index, there exists a $\delta>0$ small enough so that
$$\ind(h(X^+\cap S_\rho)\cap\hat{X})=\ind(N_\delta(h(X^+\cap S_\rho)\cap\hat{X})),$$
where $N_\delta(\Gamma)$ denotes the $\delta$ neighbourhood of $\Gamma$ for $\Gamma\subset X$.
We claim that there exists a $\varepsilon>0$ small enough so that
\begin{align}\label{2.21}
N_\varepsilon(h(X^+\cap S_\rho))\cap\hat{X}\subset N_\delta(h(X^+\cap S_\rho)\cap\hat{X}).
\end{align}
If not, for each $k\in\mathbf{N}_+$ there exists a $z_k\in\hat{X}$ such that
$$\dist(z_k, h(X^+\cap S_\rho))\leq \frac{1}{k},$$
$$\dist(z_k, h(X^+\cap S_\rho)\cap\hat{X})>\delta.$$
Since $h(X^+\cap S_\rho)$ is compact, there exists a subsequence $\{z_{k_j}\}$
of $\{z_k\}$ such that $z_{k_j}\rightarrow \bar{z}$ for $j\rightarrow\infty$.
Taking the limit yields
$$\bar{z}\in h(X^+\cap S_\rho)\cap\hat{X}\ \ \ \ {\rm and}\ \ \ \ \dist(\bar{z}, h(X^+\cap S_\rho)\cap\hat{X})>\delta.$$
This is a contradiction. Let $\hat{X}^\bot$ be the orthogonal complement space of $\hat{X}$. It is easy to see $\hat{X}^\bot=\fix\{\mathcal{Q}(s)\}$.
Since $h\in\mathfrak{B}$,  there exist a finite dimensional invariant space $Z\supset X^+\oplus\hat{X}^\bot$ and
an equivariant  homeomorphism
$\tilde{h}: Z\rightarrow Z$ such that
$$\|h(z)-\tilde{h}(z)\|\leq\varepsilon$$
for all $z\in X^+\cap S_\rho$. It follows that
$$\tilde{h}(X^+\cap S_\rho)\cap\hat{X}\subset N_\varepsilon(h(X^+\cap S_\rho))\cap\hat{X}.$$
Hence
\begin{align*}
\ind (\tilde{h}(X^+\cap S_\rho)\cap\hat{X})&\leq\ind( N_\varepsilon(h(X^+\cap S_\rho))\cap\hat{X})=\ind(h(X^+\cap S_\rho)\cap\hat{X}).
\end{align*}
Now we only need to prove $\ind (\tilde{h}(X^+\cap S_\rho)\cap\hat{X})\geq\frac{p_T}{2}$.

Take $\delta>0$ small enough so that
$$\ind( N_\delta(\tilde{h}(X^+\cap S_\rho)\cap\hat{X}))=\ind(\tilde{h}(X^+\cap S_\rho)\cap\hat{X}).$$
Since
$$N_\delta^Z(\tilde{h}(X^+\cap S_\rho)\cap\hat{X})\subset N_\delta(\tilde{h}(X^+\cap S_\rho)\cap\hat{X}),$$
we have
$$\ind( N_\delta^Z(\tilde{h}(X^+\cap S_\rho)\cap\hat{X})=\ind(\tilde{h}(X^+\cap S_\rho)\cap\hat{X}),$$
where  $N_\delta^Z(\Gamma)$ denotes the $\delta$ neighbourhood of $\Gamma$ in $Z$.
As the proof of \eqref{2.21}, there exists a $\tilde{\varepsilon}>0$ small enough so that
$$\tilde{h}(N_{\tilde{\varepsilon}}^Z (X^+)\cap S_\rho)\cap\hat{X}\subset N_\delta^Z(\tilde{h}(X^+\cap S_\rho)\cap\hat{X}).$$
Thus
$$\ind(\tilde{h}(N_{\tilde{\varepsilon}}^Z (X^+)\cap S_\rho)\cap\hat{X})=\ind(\tilde{h}(X^+\cap S_\rho)\cap\hat{X}).$$
Let $Z=X^+\oplus \hat{X}^\bot\oplus Y$, $P^\bot$ denote the orthogonal projection on $Y$ and
$$\tilde{\Gamma}=Z\setminus N_{\tilde{\varepsilon}}^Z(X^+).$$
Clearly, $P^\bot$ is equivariant and $\fix\{\mathcal{Q}(s)\}\cap Y=\{0\}$. It follows that
$$\ind \tilde{\Gamma}\leq\ind(P^\bot \tilde{\Gamma})\leq \frac{1}{2}\dim Y=\frac{1}{2}(\dim Z-\dim X^+-\dim\hat{X}^\bot).$$
Since
$$\tilde{h}(Z\cap S_\rho)=\tilde{h}(N_{\tilde{\varepsilon}}^Z(X^+)\cap S_\rho)\cup\tilde{h}(\tilde{\Gamma}\cap S_\rho),$$
one has
\begin{align}\label{2.22}
\ind(\tilde{h}(Z\cap S_\rho)\cap\hat{X})&\leq\ind(\tilde{h}(N_{\tilde{\varepsilon}}^Z(X^+)\cap S_\rho)\cap\hat{X})+\ind(\tilde{h}(\tilde{\Gamma}\cap S_\rho)\cap\hat{X})\nonumber\\
&=\ind(\tilde{h}(X^+\cap S_\rho)\cap\hat{X})+\ind(\tilde{h}(\tilde{\Gamma}\cap S_\rho)\cap\hat{X}).
\end{align}
By the mapping and monotonicity properties of the index, we have
\begin{align}\label{2.23}
\ind(\tilde{h}(\tilde{\Gamma}\cap S_\rho)\cap\hat{X})&\leq \ind(\tilde{h}(\tilde{\Gamma}\cap S_\rho))=\ind(\tilde{\Gamma}\cap S_\rho)\nonumber\\
&\leq\ind \tilde{\Gamma}\leq\frac{1}{2}(\dim Z-\dim X^+-\dim\hat{X}^\bot).
\end{align}
Since $\tilde{h}(0)=0$ and $\tilde{h}$ is a homeomorphism, $\tilde{h}(Z\cap B_\rho )$ is  a neighbourhood of $0$ in $Z$
and $\partial(\tilde{h}(Z\cap B_\rho))=\tilde{h}(Z\cap S_\rho)$,
where $B_\rho$ denotes the ball with radius $\rho$.
By Lemma \ref{lem2.6}, we obtain
\begin{align}\label{2.24}
\ind(\tilde{h}(Z\cap S_\rho)\cap \hat{X})&=\ind(\tilde{h}(Z\cap S_\rho)\cap( \hat{X}\cap Z))\nonumber\\
&=\frac{1}{2}\dim(\hat{X}\cap Z)\nonumber\\
&=\frac{1}{2}(\dim Z-\dim \hat{X}^\bot).
\end{align}
It follows from \eqref{2.22}-\eqref{2.24} that
\begin{align*}
\ind(\tilde{h}(X^+\cap S_\rho)\cap\hat{X})&\geq\frac{1}{2}(\dim Z-\dim \hat{X}^\bot)-\frac{1}{2}(\dim Z-\dim X^+-\dim \hat{X}^\bot)\\
&=\frac{1}{2}\dim X^+\\
&=\frac{p_T}{2},
\end{align*}
proving the lemma.
\end{proof}

\section{Proof of Theorem \ref{theorem1}}
First we show a property of rotating quasi-periodic functions.
\begin{lemma}\label{lemma3.1}
Assume $x\in X$ and $\lim\limits_{t\rightarrow\infty}\frac{1}{t}\int_0^tx(s)\mathrm{d}s=0$. Then
\begin{align}
\int_0^T|x'(t)|^2\mathrm{d}t\geq M_0\int_0^T|x(t)|^2\mathrm{d}t,
\end{align}
where $M_0=\min\limits_{1\leq j\leq n}\left\{\big|\frac{\theta_j+2\iota\pi}{T}\big|^2\neq0;\ \iota=-1,\ 0\right\}$.
\end{lemma}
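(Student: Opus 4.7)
The plan is to diagonalize the quasi-periodic condition via the unitary matrix $P$ from \eqref{1.3}, expand each component in an adapted Fourier series, and then reduce the inequality to a termwise comparison of Fourier coefficients.

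First, I set $y(t) = P^{-1}x(t)$ as in \eqref{02.5}. By \eqref{2.5} each component satisfies $y_j(t+T) = e^{i\theta_j}y_j(t)$, equivalently $e^{-i\theta_j t/T}y_j(t)$ is $T$-periodic; expanding the latter in Fourier series gives
\begin{equation*}
y_j(t) = \sum_{m\in\mathbf{Z}} c_j^m\, e^{i(2\pi m + \theta_j)t/T},
\end{equation*}
converging in $L^2(0,T)$. The exponentials $\{e^{i(2\pi m + \theta_j)t/T}\}_{m\in\mathbf{Z}}$ are pairwise orthogonal on $[0,T]$, so Parseval yields
\begin{equation*}
\int_0^T|y_j(t)|^2\,dt = T\sum_m|c_j^m|^2,\qquad \int_0^T|y_j'(t)|^2\,dt = T\sum_m\Big(\tfrac{2\pi m+\theta_j}{T}\Big)^2|c_j^m|^2.
\end{equation*}
Unitarity of $P$ gives $|x(t)|^2 = \sum_j|y_j(t)|^2$ and $|x'(t)|^2 = \sum_j|y_j'(t)|^2$, so both sides of the claimed inequality become double sums indexed by $(j,m)$.

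Next, I translate the Cesaro mean hypothesis into Fourier language. Applying $P^{-1}$, it becomes $\lim_{t\to\infty}\frac{1}{t}\int_0^t y_j(s)\,ds = 0$ for every $j$. When $\theta_j = 0$ the function $y_j$ is genuinely $T$-periodic and this average equals $c_j^0 = \frac{1}{T}\int_0^T y_j(s)\,ds$, forcing $c_j^0 = 0$. When $\theta_j \neq 0$ every frequency $(2\pi m + \theta_j)/T$ is nonzero, so each exponential has a bounded antiderivative; one expands the primitive $\int_0^t y_j(s)\,ds$ as an absolutely convergent Fourier series (by Cauchy--Schwarz together with $\sum_m 1/(2\pi m + \theta_j)^2 < \infty$), notes it is bounded in $t$, and concludes that the Cesaro mean tends to $0$ automatically. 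The hypothesis is therefore equivalent to $c_j^0 = 0$ for every $j$ with $\theta_j = 0$.

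Finally, the inequality reduces to the termwise bound $\big((2\pi m + \theta_j)/T\big)^2 \geq M_0$ for every $(j,m)$ with $2\pi m + \theta_j \neq 0$. For fixed $j$, the map $m\mapsto (2\pi m + \theta_j)^2$ on $\mathbf{Z}$ attains its minimum at $m = 0$ when $\theta_j \in [0,\pi]$ and at $m = -1$ when $\theta_j \in (\pi,2\pi)$; these are exactly the indices $\iota\in\{-1,0\}$ singled out in the definition of $M_0$. Hence the nonzero minimum of $\big((2\pi m + \theta_j)/T\big)^2$ over all admissible $(j,m)$ is precisely $M_0$, and summing in $(j,m)$ yields the claim. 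The only nonroutine step is the vanishing-mean calculation for $\theta_j \neq 0$, which is the main technical obstacle and the reason the condition is phrased as a time average rather than a pointwise orthogonality to $\fix\{\mathcal{Q}(s)\}$.
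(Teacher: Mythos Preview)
Your proof is correct and follows essentially the same approach as the paper: diagonalize via $P$, expand each component in the Fourier basis $e^{i(2\pi m+\theta_j)t/T}$, use the mean-zero hypothesis to kill the $m=0$ coefficient when $\theta_j=0$, and compare termwise. Your presentation is in fact somewhat more careful than the paper's, which asserts the key vanishing $\bar z_{m,0}=0$ for $\theta_m=0$ and the lower bound by $M_0$ without the explanations you supply.
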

\begin{proof}
Consider the function
$$z(t)=\diag\{e^{-\frac{i\theta_1t}{T}},\cdots,e^{-\frac{i\theta_nt}{T}}\}y(t),$$
where $y(t)$ is defined in \eqref{02.5}. Then  $z(t+T)=z(t)$ for all $t\in\mathbf{R}$, and
$z(t)=\sum\limits_{j\in\mathbf{Z}}\bar{z}_j e^{i \frac{2j\pi t}{T}}$ with $\bar{z}_j\in\mathbf{C}^n$. It follows that
\begin{align*}
x(t)&=P\left(e^{\frac{i\theta_1t}{T}}z_1(t),\cdots,e^{\frac{i\theta_nt}{T}}z_n(t)\right)^\top\\
&=\sum\limits_{j\in\mathbf{Z}}P\left(e^{\frac{i(2\pi j+\theta_1)t}{T}}\bar{z}_{1,j},\cdots,e^{\frac{i(2\pi j+\theta_n)t}{T}}\bar{z}_{n,j}\right)^\top,
\end{align*}
where $z_m(t)$ is the $m$ component of $z(t)$ and $\bar{z}_{m,j}$ the $m$ component of $\bar{z}_j$ for $1\leq m\leq n$ and $j\in\mathbf{Z}$.
Since
$$\lim\limits_{t\rightarrow\infty}\frac{1}{t}\int_0^tx(s)\mathrm{d}s=0,$$
one has $\bar{z}_{m,0}=0$, if $\theta_m=0$ for some $1\leq m\leq n$.
Then
\begin{align*}
\int_0^T|x'(t)|^2\mathrm{d}&t=T\sum\limits_{j\in\mathbf{Z}}\sum\limits_{1\leq m\leq n}\left(\frac{2\pi j+\theta_m}{T}\right)^2|\bar{z}_{m,j}|^2\\
&\geq M_0T\sum\limits_{j\in\mathbf{Z}}\sum\limits_{1\leq m\leq n}|\bar{z}_{m,j}|^2\\
&=M_0\int_0^T|x(t)|^2\mathrm{d}t.
\end{align*}

\end{proof}

Consider the following functional on $X$:
$$E(x)=\int_0^T\left(\frac{1}{2}|x'(t)|^2-V(x(t))\right)\mathrm{d}t.$$
It is easy to see that each critical point $x$ of $E$ on $X$ is a $(Q,T)$-rotating quasi-periodic solution of system \eqref{1.1}.
Clearly, $E$ is $\mathcal{Q}(s)$ invariant and  we will show that $E$ satisfies (P.-S.) condition.
\begin{lemma}
Assume (V1)-(V5) hold. Then $E$ satisfies (P.-S.) condition.
\end{lemma}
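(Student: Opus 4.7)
The plan is to verify the Palais--Smale condition by the standard two-step argument: given a sequence $\{x_k\}\subset X$ with $\{E(x_k)\}$ bounded and $E'(x_k)\to 0$, first show $\{\|x_k\|\}$ is bounded, then extract a subsequence converging strongly in $X$.

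For the boundedness, I would exploit the orthogonal decomposition $X=\hat X\oplus\fix\{\mathcal{Q}(s)\}$ established in the proof of Lemma \ref{lem2.9}, writing $x_k=\tilde x_k+\bar x_k$ with $\bar x_k\in\fix\{\mathcal{Q}(s)\}=\ker(I-Q)$ (a constant vector) and $\tilde x_k\in\hat X$. Testing the derivative against the oscillating part gives
\[
\langle E'(x_k),\tilde x_k\rangle=\int_0^T|\tilde x_k'|^2\,\mathrm{d}t-\int_0^T\nabla V(x_k)\cdot\tilde x_k\,\mathrm{d}t.
\]
Since $|\nabla V|\leq M$ by (V4), and since Lemma \ref{lemma3.1} (applicable because $\tilde x_k\in\hat X$) gives $\|\tilde x_k\|_{L^2}^2\leq M_0^{-1}\|\tilde x_k'\|_{L^2}^2$ and hence controls $\|\tilde x_k\|$ by $\|\tilde x_k'\|_{L^2}$, one deduces $\|\tilde x_k'\|_{L^2}^2\leq\|E'(x_k)\|\cdot\|\tilde x_k\|+M\sqrt T\,\|\tilde x_k\|_{L^2}$, so $\{\|\tilde x_k\|\}$ is bounded.

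To bound $\{|\bar x_k|\}$, use $E(x_k)=\tfrac12\|\tilde x_k'\|_{L^2}^2-\int_0^TV(x_k)\,\mathrm{d}t$ together with the just-obtained bound on $\|\tilde x_k'\|_{L^2}$ to conclude $\int_0^TV(x_k)\,\mathrm{d}t$ is bounded above. By (V4) and the mean value theorem, $|V(x_k)-V(\bar x_k)|\leq M|\tilde x_k|$ pointwise, and since $\bar x_k$ is constant,
\[
T\,V(\bar x_k)\leq\int_0^TV(x_k)\,\mathrm{d}t+M\int_0^T|\tilde x_k|\,\mathrm{d}t\leq C.
\]
Since $\bar x_k\in\ker(I-Q)$, assumption (V5) then forces $\{|\bar x_k|\}$ to be bounded; this is the step most reliant on the standing hypotheses and is, in my view, the heart of the proof.

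For strong convergence, the compact embedding $H^1([0,T],\mathbf{R}^n)\hookrightarrow L^2([0,T],\mathbf{R}^n)$ yields a subsequence (still denoted $\{x_k\}$) converging weakly in $X$ and strongly in $L^2$. The standard identity
\[
\int_0^T|x_k'-x_j'|^2\,\mathrm{d}t=\langle E'(x_k)-E'(x_j),x_k-x_j\rangle+\int_0^T(\nabla V(x_k)-\nabla V(x_j))\cdot(x_k-x_j)\,\mathrm{d}t
\]
shows $\{x_k\}$ is Cauchy in $X$: the first right-hand term tends to $0$ because $E'(x_k)\to 0$ and $\{\|x_k-x_j\|\}$ is bounded, while the second is majorized by $2M\sqrt T\,\|x_k-x_j\|_{L^2}\to 0$ by (V4). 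The main obstacle is the coercivity of the constant part: since Lemma \ref{lemma3.1} only provides a Poincare-type inequality on $\hat X$, no a priori control of $\bar x_k$ comes from the kinetic term, and (V5) is what rescues the argument.
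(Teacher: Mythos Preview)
Your argument is correct and follows essentially the same route as the paper's: the same decomposition $x_k=\bar x_k+\tilde x_k$, the same pairing $\langle E'(x_k),\tilde x_k\rangle$ together with (V4) and Lemma~\ref{lemma3.1} to bound $\|\tilde x_k\|$, and the same mean-value estimate plus (V5) to bound $|\bar x_k|$. The only minor variation is in the passage to strong convergence, where the paper tests $E'(x_{k_j})$ against $x_{k_j}-x$ and invokes compactness of the map $x\mapsto\nabla V(x)$ from $H^1$ to $H^{-1}$, while your Cauchy-sequence estimate using only $|\nabla V|\leq M$ and $L^2$-convergence is an equally valid (and arguably more direct) alternative.
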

\begin{proof}
Assume $\{x_k;\ k\in\mathbf{N}_+\}$ is a consequence of $X$ such that
\begin{align}\label{03.18}
|E(x_k)|\leq M,\ \ k\in\mathbf{N_+}
\end{align}
for some constant $M>0$, and
\begin{align}\label{3.18}
\lim\limits_{k\rightarrow\infty}\|E'(x_k)\|=0.
\end{align}
We will show that $\{x_k\}$ has a converging subsequence.
For each $x\in X$, one has $x=\bar{x}+\tilde{x}$,
where
$$\bar{x}=\lim\limits_{t\rightarrow\infty}\frac{1}{t}\int_0^tx(s)\mathrm{d}s,\ \ \ \ \ \tilde{x}=x-\bar{x}.$$
By \eqref{3.18}, for $k$ large enough,
$$\langle E'(x_k),\tilde{x}_k\rangle\leq\|\tilde{x}_k\|,$$
which implies
$$\int_0^T|\tilde{x}'_k(t)|^2\mathrm{d}t\leq\|\tilde{x}_k\|+\int_0^T\left(\nabla V(x_k(t)),\tilde{x}_k(t)\right)\mathrm{d}t.$$
By (V4), there exist constants $M_1,\ M_2>0$ such that
\begin{align}\label{3.20}
\int_0^T|\tilde{x}'_k(t)|^2\mathrm{d}t&\leq\|\tilde{x}_k\|+M_1\int_0^T|\tilde{x}_k(t)|\mathrm{d}t\nonumber\\
&\leq M_2\|\tilde{x}_k\|.
\end{align}
It follows from Lemma \ref{lemma3.1} and \eqref{3.20} that there exist a constant $M_3>0$ such that
$\|\tilde{x}_k\|\leq M_3$ for $k$ large enough. Then there exists a constant $M_4>0$ such that
$$\int_0^T|\tilde{x}_k(t)|\mathrm{d}t\leq M_4.$$
By \eqref{03.18} and (V4), we have
\begin{align}
M&\geq\left|\int_0^T\left(\frac{1}{2}|\tilde{x}_k'|^2-V(\bar{x}_k+\tilde{x}_k)\right)\mathrm{d}t\right|\nonumber\\
&\geq\left|\int_0^TV(\bar{x}_k)\mathrm{d}t\right|-M_5\int_0^T|\tilde{x}_k|\mathrm{d}t-\frac{M_3^2}{2}\nonumber\\
&\geq TV(\bar{x}_k)-M_5M_4-\frac{M_3^2}{2}
\end{align}
for some constant $M_5>0$. Then $V(\bar{x}_k)$ is uniformly  bounded.
It is easy to see that $\bar{x}_k\in\ker(I-Q)$, and by (V5), $\bar{x}_k$ is uniformly bounded.
Hence $\|x_k\|$ is bounded and there exists a subsequence $\{x_{k_j}\}$ of $\{x_{k}\}$
such that $\bar{x}_{k_j}\rightarrow\bar{x}$ and $x_{k_j}\rightharpoonup x$ for $j\rightarrow\infty$ and some $x\in X$.
By \eqref{3.18}, for each $\varepsilon>0$ there exists a $j_0>0$, such that for $j\geq j_0$, one has
\begin{align}
\left|\int_0^T(x'_{k_j},x'_{k_j}-x')\mathrm{d}t\right|&\leq\left|\int_0^T(\nabla V(x_{k_j}),x_{k_j}-x)\mathrm{d}t\right|+
\varepsilon\|x_{k_j}-x\|.
\end{align}
It is well known that the operator
$x\in H^1[0,T]\rightarrow V'(x)\in H^{-1}[0,T]$ is compact and we have
$$\lim\limits_{j\rightarrow\infty}\int_0^T\left(\nabla V(x_{k_j}),x_{k_j}-x\right)\mathrm{d}t=0.$$
Then for $j$ large enough,
\begin{align*}
\left|\int_0^T(x'_{k_j},x'_{k_j}-x')\mathrm{d}t\right|&\leq\varepsilon+
\varepsilon\|x_{k_j}-x\|\leq(1+M_6)\varepsilon,
\end{align*}
for some constant $M_6>0$. Thus there exists a constant $M_7>0$ such that
\begin{align*}
\|\tilde{x}_{k_j}-\tilde{x}\|&\leq M_7\int_0^T|x'_{k_j}-x'|^2\mathrm{d}t\\
&\leq M_7\left|\int_0^T(x'_{k_j},x'_{k_j}-x')\mathrm{d}t\right|+M_7\int_0^T|(x',x'_{k_j}-x')|\mathrm{d}t\\
&\leq M_7(1+M_6)\varepsilon+M_7\int_0^T|(x',x'_{k_j}-x')|\mathrm{d}t.
\end{align*}
Since $x_{k_j}\rightharpoonup x$ for $j\rightarrow\infty$ on $X$, one has
$$\lim\limits_{j\rightarrow\infty}\|\tilde{x}_{k_j}-\tilde{x}\|=0,$$
proving the lemma.
\end{proof}

Now we give the proof of Theorem \ref{theorem1}.
\begin{proof}[Proof of Theorem \ref{theorem1}]
It follows from (V1) that
$$V(x)=\frac{1}{2}\left(x, V_{xx}(0)x\right)+o(|x|^2).$$
Then for $x\in X$,
\begin{align*}
E(x)&=\int_0^T\left(\frac{1}{2}|x'|^2-\frac{1}{2}\left(x, V_{xx}(0)x\right)+o(|x|^2)\right)\mathrm{d}t\\
&\leq-\frac{1}{2}\int_0^T(Lx,x)\mathrm{d}t+o(\|x\|^2).
\end{align*}
Denote
$$\nu=\min\{\lambda_k^j;\ \lambda_k^j>0,\ \theta_j+2\pi k\neq0,\ 1\leq j\leq n,\ k\in\mathbf{Z}\}.$$
For $x\in X^+$,
we have
$$E(x)\leq -\nu\|x\|^2+o(\|x\|^2).$$
Hence there exist constants $\gamma<0$ and $\rho>0$ such that
$$E(x)\leq\gamma\ \ \ {\rm for} \ \ x\in X^+\cap S_\rho.$$
By Lemma \ref{lem2.9}, $X^+\cap S_\rho\in\mathcal{A}_{p_T/2}$. Thus
\begin{eqnarray}\label{2.14}
-\infty\leq c_1\leq\cdots\leq c_{\frac{p_T}{2}}\leq \gamma.
\end{eqnarray}
Now if we can prove that $c_j>-\infty$ and $K_{c_j}\cap\fix\{\mathcal{Q}(s)\}=\emptyset$ for $1\leq j\leq \frac{p_T}{2}$,
the proof is completed by Theorem \ref{th3.1} and Lemma \ref{p2}.
In fact, if $x\in K_{c_j}\cap\fix\{\mathcal{Q}(s)\}$, then $x$ is a constant and $x\in\ker(I-Q)$.
By (V2), $V(x)<0$ and $E(x)>0$, which contradicts \eqref{2.14}.
It follows from (V4) that there exists a constant $M_8>0$ such that
$$V(x)\leq M_8+\frac{M_0}{2}|x|^2,\ \ x\in\mathbf{R}^{n}.$$
Then by Lemma \ref{lemma3.1}, for $x\in \hat{X}$ we have
\begin{align*}
E(x)&\geq\int_0^T \left(\frac{M_0}{2}|x(t)|^2-V(x(t))\right)\mathrm{d}t\\
&\geq\int_0^T \left(\frac{M_0}{2}|x(t)|^2-\big(\frac{M_0}{2}|x|^2+M_8\big)\right)\mathrm{d}t\\
&=-M_8T.
\end{align*}
Since $\ind(A\cap\hat{X})\geq j$ for each $A\in\mathcal{A}_j$, one has
$A\cap\hat{X}\neq\emptyset$. Therefore,
$$\sup_A E\geq\sup_{A\cap\hat{X}}E\geq\inf_{\hat{X}}E\geq-M_8T$$
for $1\leq j\leq\frac{p_T}{2}$. It follows that $c_j\geq-M_8T$ for
$1\leq j\leq\frac{p_T}{2}$,
proving the theorem.
\end{proof}

\section{Proof of Theorem \ref{theorem2}}
In this section we give the proof of Theorem \ref{theorem2}.
\begin{proof}[Proof of Theorem \ref{theorem2}]
First we prove that under assumptions (V1), (V2), (V6), (V7), the functional $E$ satisfies (P.-S.) condition.
It follows from (V6) that there exist constants $a_3,\ a_4>0$ such that
$$V(x)\leq a_3|x|^\beta+a_4.$$
Assume $\{x_k;\ k\in\mathbf{N}_+\}$ is a sequence of $X$ such that
\begin{align}\label{4.23}
|E(x_k)|\leq M,\ \ k\in\mathbf{N_+}
\end{align}
for some constant $M>0$, and
\begin{align}\label{4.24}
\lim\limits_{k\rightarrow\infty}\|E'(x_k)\|=0.
\end{align}
Then for $k$ large enough,
$$\langle E'(x_k),x_k\rangle\leq\|x_k\|,$$
and hence
$$\int_0^T|x'_k|^2\mathrm{d}t\leq\int_0^T\left(\nabla V(x_k),x_k\right)\mathrm{d}t+\|x_k\|.$$
By (V6), we have
$$\int_0^T|x'_k|^2\mathrm{d}t\leq\beta\int_0^TV(x_k)\mathrm{d}t+a_5+\|x_k\|,$$
for some constant $a_5>0$. By \eqref{4.23}, we deduce
\begin{align}\label{4.28}
\int_0^TV(x_k)\mathrm{d}t\leq\frac{1}{2}\int_0^T|x'_k|^2\mathrm{d}t+M.
\end{align}
Then
$$\int_0^T|x'_k|^2\mathrm{d}t\leq\frac{\beta}{2}\int_0^T|x'_k|^2\mathrm{d}t+\beta M+a_5+\|x_k\|,$$
which implies
\begin{align}\label{4.29}
(1-\frac{\beta}{2})\int_0^T|x'_k|^2\mathrm{d}t\leq\beta M+a_5+\|x_k\|.
\end{align}
By (V7), \eqref{4.28} and \eqref{4.29}, one has
\begin{align}
a_1\int_0^T|x_k^\bot|^\alpha\mathrm{d}t&\leq a_2T+\int_0^T V(x_k)\mathrm{d}t\\
&\leq a_2T+\frac{1}{2}\int_0^T|x'_k|^2\mathrm{d}t+M\\
&\leq a_6\|x_k\|+a_7,
\end{align}
for some constants $a_6,\ a_7>0$. Let
$$\bar{x}_k=\lim\limits_{t\rightarrow\infty}\frac{1}{t}\int_0^tx_k(s)\mathrm{d}s.$$
It is easy to see $\bar{x}_k=\frac{1}{T}\int_0^Tx_k^\bot(s)\mathrm{d}s,$ and there exist constants $a_8\ a_{9},\ a_{10}>0$ such that
\begin{align}\label{4.33}
|\bar{x}_k|^2&=\left|\frac{1}{T}\int_0^Tx_k^\bot(s)\mathrm{d}s\right|^2\leq\frac{1}{T^2}\left(\int_0^T|x_k^\bot(s)|\mathrm{d}s\right)^2\nonumber\\
&\leq a_{8}\left(\int_0^T|x_k^\bot(s)|^\alpha\mathrm{d}s\right)^{\frac{2}{\alpha}}\leq a_{9}+ a_{10}\|x_k\|^{\frac{2}{\alpha}}.
\end{align}
By Lemma \ref{lemma3.1}, \eqref{4.29} and \eqref{4.33} we have
$$\|x_k\|^2\leq a_{11}+a_{12}\|x_k\|+a_{13}\|x_k\|^{\frac{2}{\alpha}}$$
for some constants $a_{11},\ a_{12},\ a_{13}>0$. Since $\alpha\in(1,2)$, there exists a constant $a_{14}>0$ such that
$$\|x_k\|\leq a_{14}.$$

The remaining proof is the same as in Theorem  \ref{theorem1}, and we omit it.
\end{proof}

\section*{Acknowledgment}
This work was supported by National Basic Research Program of China (grant No. 2013CB834102), NSFC (grant No. 11571065, 11171132, 11201173), Science and Technology Developing Plan of Jilin Province (No. 20180101220JC), JilinDRC (No. 2017C028-1) and the Fundamental Research Funds for the Central Universities (No. 2412018QD036).

\baselineskip 9pt \renewcommand{\baselinestretch}{1.08}


\begin{thebibliography}{xx}
\bibitem{Arnold} Arnold, V. Proof of a theorem by A. N. Kolmogorov on the preservation of quasi-periodic motions under small perturbations of the Hamiltonian. Usp. Mat. Nauk. 18 (1963), 13-40.
\bibitem{Benci}Benci, V. A geometrical index for the group $S^1$ and some applications to the study of periodic solutions of ordinary differential equations. Comm. Pure Appl. Math. 34 (1981), no. 4, 393-432.
\bibitem{Chang} Chang, X; Li, Y. Rotating periodic solutions of second order dissipative dynamical systems. Discrete Contin. Dyn. Syst. 36 (2016), 643-652.
\bibitem{Ekeland}Ekeland, I; Lasry, J. On the number of periodic trajectories for a Hamiltonian flow on a convex energy surface. Ann. Math. 112 (1980), 283-319.
\bibitem{Hu1}Hu, X; Wang, P. Conditional Fredholm determinant for the S-periodic orbits in Hamiltonian systems. J. Funct. Anal. 261 (2011), 3247-3278.
\bibitem{Hu2}Hu, X; Ou, Y; Wang, P. Trace formula for linear Hamiltonian systems with its applications to elliptic Lagrangian solutions. Arch. Ration. Mech. Anal. 216 (2015), 313-357.
\bibitem{Kolmogorov} Kolmogorov, A. On the conservation of conditionally periodic motions for a small change in Hamilton's function. Dokl. Akad. Nauk. SSSR 98 (1954), 525-530.
\bibitem{Li} Li, Y; Yi, Y. A quasi-periodic Poincar\'{e}'s theorem. Math. Ann. 326 (2003), 649-690.
\bibitem{LiuG}Liu, G; Li, Y; Yang, X. Existence and multiplicity of rotating periodic solutions for resonant Hamiltonian systems. J. Differential Equations  265  (2018),  no. 4, 1324-1352.
\bibitem{LiuH}Liu, H. Multiple P-invariant closed characteristics on partially symmetric compact convex hypersurfaces in $\mathbf{R}^{2n}$. Calc. Var. Partial Differential Equations 49 (2014), 1121-1147.
\bibitem{Long}Long, Y. The minimal period problem of periodic solutions for autonomous superquadratic second order Hamiltonian systems. J. Differential Equations 111 (1994), no. 1, 147-174.
\bibitem{Mawhin}Mawhin, J; Willem, M. Critical Point Theory and Hamiltonian Systems. Springer, New York, 1989.
\bibitem{Moser1}Moser, J. On invariant curves of area preserving mappings of an annulus. Nachr. Akad. Wiss. G¨ott. Math. Phys. K1 (1962), 1-20.
\bibitem{Moser}Moser, J. Periodic orbits near an equilibrium and a theorem by Alan Weinstein. Comm. Pure Appl. Math. 29 (1976), 724-747.
\bibitem{Ra}Rabinowitz, P. Periodic solutions of Hamiltonian systems, Comm. in Pure and Appl. Math. 31 (1978), 157-184.
\bibitem{ra-in} Rabinowitz, P. Minimax methods in critical point theory with applications to differential equations. CBMS Regional Conference Series in Mathematics, 65. American Mathematical Society, Providence, RI, 1986.
\bibitem{Ra1}Rabinowitz, P. On subharmonic solutions of Hamiltonian systems. Comm. Pure Appl. Math. 33 (1980), no. 5, 609-633.
\bibitem{Struwe}Struwe, M. Variational methods. Applications to nonlinear partial differential equations and Hamiltonian systems. Fourth edition. Springer-Verlag, Berlin, 2008.
\bibitem{Tang}Tang, C; Wu, X. Some critical point theorems and their applications to periodic solution for second order Hamiltonian systems. J. Differential Equations 248 (2010), no. 4, 660-692.
\bibitem{Zhang}Zhang, D. P-cyclic symmetric closed characteristics on compact convex P-cyclic symmetric hypersurface in $\mathbf{R}^{2n}$. Discrete Contin. Dyn. Syst. 33 (2013), 947-964.
\bibitem{Z-L}Zhang, Q; Liu, C. Infinitely many periodic solutions for second order Hamiltonian systems. J. Differential Equations 251 (2011), no. 4-5, 816-833.
\bibitem{ZhangS}Zhang, S. Periodic solutions for some second order Hamiltonian systems. Nonlinearity 22 (2009), no. 9, 2141-2150.
\end{thebibliography}
\end{document}